\documentclass[a4paper,12pt]{article}
\usepackage{amsmath}
\usepackage{amsthm}
\usepackage{amsfonts}
\usepackage{amssymb}
\usepackage{latexsym}
\usepackage{mathrsfs}
\usepackage{geometry}
\usepackage{dsfont}
\usepackage{graphicx}
\usepackage{color}
\usepackage[all]{xy}
\usepackage{hyperref}
\usepackage{amsfonts,mathrsfs,amssymb,amsthm}
\usepackage{mathtools}

\usepackage{amsmath,amscd}

\usepackage{color}
\usepackage{hyperref}
\usepackage[all]{xy}
\usepackage{pdfpages}
\usepackage{tikz-cd}
\usepackage{graphicx}
\usepackage{fancyhdr}
\usetikzlibrary{calc}      

\allowdisplaybreaks[4]
\newtheorem{theorem}{Theorem}[section]
\newtheorem{proposition}[theorem]{Proposition}
\newtheorem{lemma}[theorem]{Lemma}
\newtheorem{corollary}[theorem]{Corollary}
\newtheorem{example}[theorem]{Example}
\newtheorem{definition}[theorem]{Definition}

\newtheorem*{open question}{Open Question}

\theoremstyle{remark}
\newtheorem{remark}[theorem]{Remark}

\newcommand{\Tor}{\mbox{\rm Tor}}

\newcommand{\Id}{\mathrm{Id}}

\def\Hom{{\rm Hom}}

\def\Ext{{\rm Ext}}

\def\Ker{{\rm Ker}}

\def\qgld{{\rm qgld}}
\def\Cone{{\rm Cone}}
\def\H{{\rm H}}
\def\hsup{{\rm hsup}}
\def\qpd{{\rm qpd}}
\def\pd{{\rm pd}}
\def\EssIm{{\rm EssIm}}

\def\Mod{{\rm Mod}}
\def\mod{{\rm mod}}
\def\rad{{\rm rad}}
\def\Proj{{\rm Proj}}
\def\hinf{{\rm hinf}}
\def\Im{{\rm Im}}
\begin{document}
	
	\begin{center}
		{\large \bf Quasi-projective dimensions over homological ring epimorphisms}
		\vspace{0.5cm}
		
		Weiheng Xia$^a$, Xiaolei Zhang$^b$
		\vspace{0.3cm}
		
		{\footnotesize
			a. School of Mathematical Sciences, Capital Normal University, 100048 Beijing, P. R. China\\
			b. School of Mathematics and Statistics, Tianshui Normal University, 741001 Tianshui, P. R. China
		}
	\end{center}
	
	\bigskip
	\centerline{\bf Abstract}
	\bigskip
	\leftskip10truemm \rightskip10truemm \noindent
	In this paper, we study quasi-projective dimensions of modules over rings connected by homological ring epimorphisms. We establish several inequalities relating the quasi-projective dimensions over the source and target rings. As applications, we investigate the quasi-global dimension of triangular matrix rings and provide examples showing that the class of modules with finite quasi-projective dimension does not satisfy the 2-out-of-3 property in general.
	\vspace{0.3cm}
	
	{\it Key Words:} Quasi-projective dimension, homological ring epimorphism, triangular matrix ring.
	
	{\it 2020 Mathematics Subject Classification:} 13E99, 16E10, 16G10.
	\leftskip0truemm \rightskip0truemm
	
	\section{Introduction}
Throughout, all rings are associative with identity, all modules are left modules. Let $R$ be a ring, denote by $R$-\Proj\ the class of all projective $R$-modules, $R$-\Mod\ the class of all $R$-modules, $R$-\mod\ the class of all finitely generated $R$-modules, and $D(R)$ the unbounded  derived category of $R$.
	
	The quasi-projective dimension, introduced by Gheibi, Jorgensen and Takahashi \cite{gheibi2020}, is a homological invariant that refines the classical projective dimension. For a module $M$ over a ring $R$, $\qpd_R(M)$ is defined as the infimum of $\sup P_\bullet - \hsup P_\bullet$ over all bounded complexes of projective modules whose homology is a direct sum of copies of $M$ (see Definition \ref{def:qpd}). This notion captures a broader class of modules than those of finite projective dimension; for instance, over a complete intersection ring every finitely generated module has finite quasi-projective dimension \cite{gheibi2020}. Subsequent work has revealed deep connections with depth formulas for modules \cite{jorge2024}, derived depth formulas  \cite{ferraro2026}.  Furthermore, Ischebeck's formula and grade inequalities for modules of finite quasi-projective dimension were established in \cite{ischebeck2026}. Recently, the quasi-global dimension $\qgld(R)$ (Definition \ref{def:qgld}) was introduced by Chen, Chen and Liu \cite{chen2025}, and it has been shown that there exist finite-dimensional algebras with finite quasi-global dimension but infinite global dimension. Consequently, this will lead to the need for  bounds or explicit formulas of quasi-global dimensions across a wide range of ring-theoretic constructions, such as trivial extensions, triangular matrix rings, and Morita contexts.

Homological ring epimorphisms, due to Geigle and Lenzing \cite{geigle-lenzing}, are ring epimorphisms $\lambda:R\to S$ such that $\Tor_n^R(S,S)=0$ for all $n\ge1$. They induce fully faithful embeddings of derived categories and are central in tilting theory. In this paper, we investigate how quasi-projective dimensions behave along such epimorphisms. In fact, we show that if $\lambda:R\to S$ is a homological ring epimorphism, then $\qpd_S(M)\le \qpd_R(M)$ for every $S$-module $M$ (Proposition \ref{p7}), and thus
 $\qgld(S)\leq \qgld(R)$ provided  $S$ is a finitely generated $R$-module (Corollary \ref{c8}). We also obtain that if moreover $\pd_R S<\infty$, then $\qpd_S(M)\le \qpd_R(M)\le \qpd_S(M)+\pd_R S$ (Corollary \ref{c11}).

We also study the 2-out-of-3 property for modules of finite quasi-projective dimension over the algebra $A_q=K\langle x,y\rangle/(x^2,y^2,yx+qxy)$ (a local self-injective algebra), and show that this class does not satisfy any of the standard 2-out-of-3 properties (Section 3). Finally, we apply our results to triangular matrix rings, giving bounds on their quasi-global dimensions in terms of the corner algebras (Section 4).

\section{The quasi-projective dimensions on homological ring epimorphisms}

Recall that a ring homomorphism $\lambda:R\to S$ is called a { ring epimorphism} if it is an epimorphism in the category of rings, or equivalently, the canonical map $\lambda_*:S\otimes_R S\to S$ is an isomorphism of $R$-$R$-bimodules.

	\begin{definition}\label{def-s-f}
		An epimorphism $\lambda:R\rightarrow S$ is said to be homological provided that $\Tor_n^R(S,S)=0$ for each $n\geq 1$.
	\end{definition}
	
	\begin{remark}
		Let $\lambda_\ast:S\text{-}\Mod\rightarrow R\text{-}\Mod$ be the restriction functor induced by $\lambda:R\rightarrow S$.
		\begin{enumerate}
			\item We denote by $G:=D(\lambda_\ast):D(S)\rightarrow D(R)$ the derived functor of the exact functor $\lambda_\ast$.
			\item $\lambda$ is a homological ring epimorphism if and only if $G$ is fully faithful if and only if $S\otimes_R^{\mathbb{L}}S\cong S$ in $D(S)$.
		\end{enumerate}
	\end{remark}
	
	Let $G:\mathscr{T}_1\rightarrow \mathscr{T}_2$ be a triangulated functor. Define $\EssIm(G)$ to be the subcategory of $\mathscr{T}_2$ whose objects are $\{X\in \mathscr{T}_2\mid X\cong G(Y)\ \mbox{for some }Y\in\mathscr{T}_1\}$, with morphisms $\Hom_{\EssIm(G)}(X_1,X_2)=\Hom_{\mathscr{T}_2}(X_1,X_2)$.
	
	\begin{lemma}\label{L1}
		Let $G:\mathscr{T}_1\rightarrow \mathscr{T}_2$ be a full triangulated functor. Set $\epsilon:=\EssIm(G)$. Then $\epsilon$ is a full triangulated subcategory of $\mathscr{T}_2$, and is closed under isomorphisms.
	\end{lemma}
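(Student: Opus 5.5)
We verify the axioms of a triangulated subcategory for $\epsilon$ directly. By definition it is a full subcategory of $\mathscr{T}_2$, and it is closed under isomorphisms: if $X\cong G(Y)$ and $X'\cong X$, then $X'\cong G(Y)$. It contains the zero object, since $G(0)\cong 0$. It is also closed under the shift functors. If $X\cong G(Y)$, then $X[\pm1]\cong G(Y)[\pm1]\cong G(Y[\pm1])$, using the natural isomorphism $G\circ[1]\cong[1]\circ G$ that comes with a triangulated functor. So the only real content is closure under cones.

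For cones, let $X_1\to X_2\to X_3\to X_1[1]$ be a distinguished triangle in $\mathscr{T}_2$ with $X_1,X_2\in\epsilon$. We must show $X_3\in\epsilon$. Choose isomorphisms $\alpha_i:X_i\to G(Y_i)$ for $i=1,2$, and transport the morphism $f:X_1\to X_2$ to $g':=\alpha_2 f\alpha_1^{-1}:G(Y_1)\to G(Y_2)$. Because $G$ is full, there is $g:Y_1\to Y_2$ in $\mathscr{T}_1$ with $G(g)=g'$. We complete $g$ to a distinguished triangle $Y_1\to Y_2\to Y_3\to Y_1[1]$ in $\mathscr{T}_1$. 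Applying $G$, and composing the connecting map with the isomorphism $G(Y_1[1])\cong G(Y_1)[1]$, gives a distinguished triangle $G(Y_1)\xrightarrow{g'}G(Y_2)\to G(Y_3)\to G(Y_1)[1]$ in $\mathscr{T}_2$.

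The pair $(\alpha_1,\alpha_2)$ gives a commutative square from the base of the first triangle to the base of this one. By the axiom (TR3), it extends to a morphism of triangles $X_3\to G(Y_3)$. Since $\alpha_1$ and $\alpha_2$ are isomorphisms, the five-lemma for triangulated categories (via the long exact $\Hom$ sequences and Yoneda) shows this third map is an isomorphism. Hence $X_3\cong G(Y_3)\in\epsilon$.

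Finally, since $\epsilon$ is a full subcategory, closed under isomorphisms and shifts, and contains the third vertex of any triangle whose first two vertices lie in it, the distinguished triangles of $\mathscr{T}_2$ with all vertices in $\epsilon$ make $\epsilon$ a triangulated category. The axioms are inherited from $\mathscr{T}_2$; in particular, the cone needed for (TR1) lies in $\epsilon$ by the previous step.

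The main obstacle is the cone step. It is the only place fullness is used: we need to lift the morphism $g'$ to $\mathscr{T}_1$. It also requires care with the isomorphism $G\circ[1]\cong[1]\circ G$ when identifying the image of a triangle as a distinguished triangle. Faithfulness is not needed.
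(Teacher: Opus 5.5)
Your proposal is correct and follows the same route as the paper: transport $f$ along the isomorphisms $\alpha_i$, lift it to $\mathscr{T}_1$ using fullness, take a cone there, and apply $G$. Your use of (TR3) together with the triangulated five lemma is the explicit form of the paper's step ``by the uniqueness of cones up to isomorphism, $G(Y_3)\cong\Cone(f)$''. Likewise, your handling of the isomorphism $G\circ[1]\cong[1]\circ G$ makes precise what the paper leaves implicit.
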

	\begin{proof}
		It follows by its definition that $\epsilon$ is a full subcategory of $\mathscr{T}_2$ that is closed under isomorphisms.
		
		First, $\epsilon$ is closed under the shift functor $[1]$ and its inverse $[-1]$: if $X\cong G(Y)$ for some $Y\in\mathscr{T}_1$, then $X[1]\cong G(Y[1])$ and $X[-1]\cong G(Y[-1])$, so both belong to $\EssIm(G)$.
		
		Second, $\epsilon$ is closed under cones of morphisms. Let $f:X_1\rightarrow X_2$ be a morphism in $\epsilon$. Then there exist $Y_1,Y_2\in\mathscr{T}_1$ and isomorphisms $\alpha_i:X_i\cong G(Y_i)$ for $i=1,2$. Consider the morphism $g:=\alpha_2 f \alpha_1^{-1}:G(Y_1)\rightarrow G(Y_2)$ in $\mathscr{T}_2$. Since $G$ is full, there exists $h:Y_1\rightarrow Y_2$ in $\mathscr{T}_1$ such that $G(h)=g$. Now take a cone of $h$, say $Y_3$, fitting into a distinguished triangle $Y_1\xrightarrow{h}Y_2\rightarrow Y_3\rightarrow Y_1[1]$ in $\mathscr{T}_1$. Applying $G$ gives a distinguished triangle in $\mathscr{T}_2$, and by the uniqueness of cones up to isomorphism, we have $G(Y_3)\cong \Cone(f)$. Thus $\Cone(f)\in\EssIm(G)$.
	\end{proof}
	
	Let $X_\bullet$ be a complex. Define $\tau_{\leq n}X_\bullet$ and $\tau_{\geq n}X_\bullet$ to be the good truncations of $X_\bullet$, and $X_{\leq n}$ and $X_{\geq n}$ to be the brutal truncations  of $X_\bullet$.
	
\begin{lemma}\cite[Lemma 13.30.3]{Dsp}\label{dsp}
		Let $F:\mathscr{A}_1\rightarrow \mathscr{A}_2$ and $G:\mathscr{A}_2\rightarrow \mathscr{A}_1$ be adjoint functors between abelian categories. If the derived functors $RF$ and $RG$ exist, then $RF:D(\mathscr{A}_1)\rightarrow D(\mathscr{A}_2)$ and $RG:D(\mathscr{A}_2)\rightarrow D(\mathscr{A}_1)$ are adjoint functors between derived categories.
	\end{lemma}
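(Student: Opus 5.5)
The plan is to build the adjunction at the level of homotopy categories of complexes and then pass to derived categories, using resolutions on which $RF$ and $RG$ are computed. Throughout, I read the statement as $F$ left adjoint to $G$, and I aim for a natural isomorphism
\[
\Hom_{D(\mathscr{A}_2)}(RF(X),Y)\cong \Hom_{D(\mathscr{A}_1)}(X,RG(Y))
\]
for $X\in D(\mathscr{A}_1)$ and $Y\in D(\mathscr{A}_2)$.

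\textbf{Step 1.} Extend the adjunction degreewise to complexes. The unit $\eta:\Id\to GF$ and counit $\varepsilon:FG\to \Id$ are applied termwise. They commute with differentials and with homotopies, because $F$ and $G$ are additive. This gives an adjunction $K(F)\dashv K(G)$ between the homotopy categories $K(\mathscr{A}_1)$ and $K(\mathscr{A}_2)$, so
\[
\Hom_{K(\mathscr{A}_2)}(F(P),J)\cong \Hom_{K(\mathscr{A}_1)}(P,G(J))
\]
naturally for all complexes $P$ and $J$.

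\textbf{Step 2.} Choose resolutions. Since $RG$ exists, I take $Y\to J$ to be a quasi-isomorphism onto a $K$-injective complex, so that $RG(Y)=G(J)$. Since $RF$ exists, I take $X\to I$ to be a quasi-isomorphism onto a complex that computes $RF$, with $RF(X)=F(I)$ ($K$-injective, or more generally $F$-acyclic in the sense of Deligne's definition). Because $J$ is $K$-injective,
\[
\Hom_D(RF(X),Y)=\Hom_{K}(F(I),J)\cong \Hom_K(I,G(J))
\]
by Step 1. It then remains to identify $\Hom_K(I,G(J))$ with $\Hom_D(X,G(J))=\Hom_D(X,RG(Y))$.

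\textbf{Step 3 (main obstacle).} The identification in Step 2 needs $G(J)$ to be $K$-injective, or at least to satisfy $\Hom_K(C,G(J))=0$ for every acyclic complex $C$; then $\Hom_K(I,G(J))=\Hom_D(I,G(J))=\Hom_D(X,G(J))$. By Step 1, $\Hom_K(C,G(J))\cong\Hom_K(F(C),J)$, so it suffices that $F$ sends acyclic complexes to acyclic ones, at least after replacing $C$ by resolutions of the chosen type.

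\emph{Plan for Step 3.} I would first try to deduce this from the existence of $RF$. Since $RF$ is a triangulated functor on all of $D(\mathscr{A}_1)$, it sends an acyclic complex to zero. On the other hand, the comparison map $F(C)\to RF(C)$ is a quasi-isomorphism for complexes in the class on which $RF$ is computed. If the class of such complexes can be taken closed under the passage $C\mapsto$ resolution used above, this yields the vanishing. I expect this to be the delicate point. If it fails in general, I would fall back on the universal-property argument. That argument builds the derived unit $\Id\to RG\circ RF$ via the universal property of $RG\circ RF$ (the unit $\eta$ composed with the canonical transformations $Q\circ F\to RF\circ Q$ and $Q\circ G\to RG\circ Q$). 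It then checks the triangle identities on the resolving objects above, where all maps reduce to the termwise $\eta$ and $\varepsilon$ of Step 1.

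\textbf{Step 4.} Check naturality in $X$ and $Y$. This holds because every isomorphism used is natural in the resolutions, and resolutions are functorial up to unique isomorphism in $D$.
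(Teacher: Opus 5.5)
Steps 1 and 2 are fine. Step 3, however, is a genuine gap, and no argument can close it: for $F\dashv G$, the adjunction $RF\dashv RG$ you are aiming at is false in general. What Step 3 asks for is $\Hom_K(C,G(J))\cong\Hom_K(F(C),J)=0$ for acyclic $C$. That amounts to $F$ preserving acyclic complexes, i.e.\ to $F$ being exact.

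The existence of $RF$ carries no such information. For module categories $RF$ always exists via K-injective replacements, and $RF(C)=0$ for acyclic $C$ only because the replacement of $C$ is $0$, not because $F(C)$ is acyclic. Concretely, take $\mathscr A_1=\mathbb Z\text{-}\Mod$, $\mathscr A_2=\mathbb Z/2\text{-}\Mod$, $F=\mathbb Z/2\otimes_{\mathbb Z}-$ and $G$ restriction of scalars. Then $F\dashv G$, $RG=G$ because $G$ is exact, and $RF$ exists. Computing with the injective resolution $\mathbb Q\to\mathbb Q/\mathbb Z$ of $\mathbb Z$, whose terms are divisible, gives $RF(\mathbb Z)=0$. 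Hence $\Hom_{D(\mathbb Z/2)}(RF(\mathbb Z),\mathbb Z/2)=0$, while $\Hom_{D(\mathbb Z)}(\mathbb Z,RG(\mathbb Z/2))\cong\mathbb Z/2$. The other direction fails too: $\Hom_{D(\mathbb Z)}(\mathbb Z/2,\mathbb Z[1])\cong\Ext^1_{\mathbb Z}(\mathbb Z/2,\mathbb Z)\neq 0$, but $\Hom_{D(\mathbb Z/2)}(\mathbb Z/2,RF(\mathbb Z)[1])=0$. Your fallback does not help either. The canonical transformations $QF\to RF\circ Q$ and $QG\to RG\circ Q$ do give a unit $\Id\to RG\circ RF$, but they point the wrong way to produce a counit $RF\circ RG\to\Id$, and by the example no adjunction exists.

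The statement in the paper contains a slip. The paper gives no proof of its own; it cites the Stacks Project, where the lemma says that $LF$ (not $RF$) is left adjoint to $RG$. This is also how it is used in the next corollary, where $F=S\otimes^{\mathbb L}_R-$ is a left derived functor and $G$ is exact. For that statement your Steps 1 and 2 already suffice once you resolve $X$ from the left. Choose a K-projective $P\to X$ (available for module categories), so $LF(X)=F(P)$, and a K-injective $Y\to J$. Then
\[
\Hom_{D(\mathscr A_2)}(LF(X),Y)\cong\Hom_{K(\mathscr A_2)}(F(P),J)\cong\Hom_{K(\mathscr A_1)}(P,G(J))\cong\Hom_{D(\mathscr A_1)}(X,RG(Y)),
\]
naturally in $X$ and $Y$. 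The first isomorphism holds because $J$ is K-injective and the last because $P$ is K-projective, so Step 3 never arises.

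In the generality of the Stacks Project, only the existence of $LF$ and $RG$ in Deligne's sense is assumed. There one argues instead that $LF(X)$ is the value of an essentially constant pro-system $\{F(X')\}_{X'\to X}$ and $RG(Y)$ that of an essentially constant ind-system $\{G(Y')\}_{Y\to Y'}$. Both Hom sets are then the same double colimit of $\Hom(F(X'),Y')\cong\Hom(X',G(Y'))$. With $RF$ in place of $LF$ one gets an ind-system in $X$, and maps out of it are computed by a limit rather than a colimit, which is why your version breaks.
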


	\begin{corollary}
		Let $\lambda:R\rightarrow S$ be a homomorphism of rings. Set $G:=D(\lambda_\ast):D(S)\rightarrow D(R)$ and $F:=S\otimes_R^{\mathbb{L}}-:D(R)\rightarrow D(S)$. Then $(F,G)$ is an adjoint pair between $D(R)$ and $D(S)$.
	\end{corollary}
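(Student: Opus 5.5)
The plan is to obtain the corollary directly from Lemma \ref{dsp}, applied to the underived adjunction between module categories. First, at the level of abelian categories, the extension-of-scalars functor $S\otimes_R-:R\text{-}\Mod\to S\text{-}\Mod$ is left adjoint to the restriction functor $\lambda_\ast:S\text{-}\Mod\to R\text{-}\Mod$. This is the standard tensor--hom adjunction
\[
\Hom_S(S\otimes_R X,Y)\cong \Hom_R(X,\Hom_S(S,Y))\cong\Hom_R(X,\lambda_\ast Y),
\]
which is natural in $X\in R\text{-}\Mod$ and $Y\in S\text{-}\Mod$.

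Second, we check that both derived functors exist on the unbounded derived categories. Since $\lambda_\ast$ is exact, it preserves quasi-isomorphisms and acyclic complexes, so it descends directly to $G=D(\lambda_\ast)$. This functor is simultaneously its own left and right derived functor. For $S\otimes_R-$, we use that $D(R)$ has K-projective (equivalently, DG-projective) resolutions of arbitrary unbounded complexes, by Spaltenstein's theorem or by the existence of the projective model structure on complexes. Setting $F(X)=S\otimes_R P_X$, where $P_X\to X$ is a K-projective resolution, gives the left derived functor $S\otimes_R^{\mathbb L}-$.

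Third, we invoke the derived-category version of Lemma \ref{dsp}, in which a left derived functor of a left adjoint is paired with a right derived functor of a right adjoint. The Stacks Project lemma is stated in exactly this form: if $F$ is left adjoint to $G$, $LF$ exists, and $RG$ exists, then $LF\dashv RG$. Since $RG=D(\lambda_\ast)=G$, this yields $F\dashv G$.

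As a sanity check, the adjunction can also be verified by hand. For a K-projective $P$ one has
\[
\Hom_{D(S)}(S\otimes_R P,Y)=\Hom_{K(S)}(S\otimes_R P,Y)\cong \Hom_{K(R)}(P,\lambda_\ast Y)=\Hom_{D(R)}(P,\lambda_\ast Y).
\]
Here the first equality holds because $S\otimes_R P$ is K-projective over $S$: indeed $\Hom_S(S\otimes_R P,-)\cong\Hom_R(P,\lambda_\ast -)$ sends acyclic complexes to acyclic complexes. The middle isomorphism is the termwise adjunction, which is compatible with homotopies.

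The main obstacle is the second step, namely the existence of the derived functors in the unbounded setting. Lemma \ref{dsp} requires this existence as a hypothesis, and for unbounded complexes it rests on K-projective resolutions rather than on ordinary projective resolutions. The remaining steps are formal consequences of the adjunction.
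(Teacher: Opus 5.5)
Your proposal is correct and follows the paper's route: the paper's proof is simply an appeal to Lemma \ref{dsp} for the adjunction $S\otimes_R-\dashv\lambda_\ast$. You additionally check the lemma's hypotheses in the unbounded setting, using K-projective resolutions and the exactness of $\lambda_\ast$. You also rightly read the lemma as pairing the left derived functor of the left adjoint with the right derived functor of the right adjoint, whereas the paper's statement of Lemma \ref{dsp} writes $RF$.
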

\begin{proof}
	It directly follows by  Lemma \ref{dsp}.	
\end{proof}
	
	\begin{lemma}\label{l4}
		Let $\lambda:R\rightarrow S$ be a homomorphism of rings. Set $G:=D(\lambda_\ast):D(S)\rightarrow D(R)$ and $F:=S\otimes_R^{\mathbb{L}}-:D(R)\rightarrow D(S)$.	Suppose $X_\bullet \in D(R)$ has only finitely many non-zero homology groups, and there exist $N_i\in S\text{-}\Mod$ such that $\H_i(X_\bullet)\cong G(N_i)$ as $R$-modules for each $i\in\mathbb{Z}$. Then $X_\bullet\in \EssIm(G)$.
	\end{lemma}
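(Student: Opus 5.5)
The plan is to induct on the number of nonzero homology groups of $X_\bullet$, using the good truncation triangles together with the closure properties of $\epsilon:=\EssIm(G)$ from Lemma \ref{L1}. One caveat comes first. Lemma \ref{L1} needs $G$ to be full, and for an arbitrary ring homomorphism the extension data in $D(R)$ need not come from $D(S)$. So in the argument I will use that $G$ is fully faithful, i.e.\ that $\lambda$ is a homological ring epimorphism, which is the situation of the Remark and of all later applications. Note also that $G=D(\lambda_\ast)$ is a triangulated functor, since $\lambda_\ast$ is exact. Hence, by Lemma \ref{L1}, $\epsilon$ is a full triangulated subcategory of $D(R)$ that is closed under isomorphisms, shifts and cones.

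\emph{Base cases.} Let $n$ be the number of indices $i$ with $\H_i(X_\bullet)\neq 0$.
\begin{itemize}
\item If $n=0$, then $X_\bullet$ is acyclic, so $X_\bullet\cong 0=G(0)$.
\item If $n=1$, with the nonzero homology in degree $i$, then the canonical maps $\tau_{\geq i}X_\bullet\to X_\bullet$ and $\tau_{\geq i}X_\bullet\to \tau_{\leq i}\tau_{\geq i}X_\bullet$ are quasi-isomorphisms. Hence $X_\bullet\cong \H_i(X_\bullet)[i]$ in $D(R)$. (I will fix the shift convention so that the stalk sits in degree $i$.) By hypothesis this is isomorphic to $G(N_i)[i]\cong G(N_i[i])$, so it lies in $\epsilon$.
\end{itemize}

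\emph{Inductive step.} Suppose $n\geq 2$, and let $m$ be the largest index with $\H_m(X_\bullet)\neq 0$. There is a distinguished triangle in $D(R)$,
$$\tau_{\leq m-1}X_\bullet\longrightarrow X_\bullet\longrightarrow \tau_{\geq m}X_\bullet\longrightarrow (\tau_{\leq m-1}X_\bullet)[1],$$
coming from the short exact sequence of complexes $0\to\tau_{\leq m-1}X_\bullet\to X_\bullet\to X_\bullet/\tau_{\leq m-1}X_\bullet\to 0$. Here the quotient is quasi-isomorphic to $\tau_{\geq m}X_\bullet$, and I will use whichever homological indexing convention makes this exact.

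The two outer terms have the same homology as $X_\bullet$ in their respective ranges. Each therefore has fewer than $n$ nonzero homology groups, and each of these is still of the form $G(N_i)$. By induction both outer terms lie in $\epsilon$. Rotating the triangle exhibits $X_\bullet$ as isomorphic to the cone of the connecting morphism $(\tau_{\geq m}X_\bullet)[-1]\to\tau_{\leq m-1}X_\bullet$. Since $\epsilon$ is closed under shifts, cones and isomorphisms, it follows that $X_\bullet\in\epsilon$.

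The only real obstacle is the cone step. It relies on Lemma \ref{L1}, which in turn needs fullness of $G$ to lift the connecting morphism to a morphism in $D(S)$. Everything else is standard truncation bookkeeping.
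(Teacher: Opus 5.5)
Your proposal is correct and is essentially the paper's proof. Both argue by induction on the number of nonzero homology groups, handle the stalk case via $X_\bullet\cong G(N_b[b])$, and finish with a good-truncation triangle plus the closure of $\EssIm(G)$ under shifts and cones from Lemma \ref{L1}.

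Your caveat is justified. The paper's proof also invokes Lemma \ref{L1} and so tacitly needs $G$ to be full, which holds for homological epimorphisms, the only case used (in Corollary \ref{c6}). For an arbitrary $\lambda$ the statement fails. Take $R=K[x]/(x^2)\to S=K$: the perfect complex $R\xrightarrow{x}R$ has homology $K$ in degrees $0$ and $1$. It is not isomorphic to $K\oplus K[1]$ in $D(R)$, because $\pd_R K=\infty$. But every object of $\EssIm(G)$ is a sum of its shifted homology stalks.
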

	\begin{proof}
		By Lemma \ref{L1}, $\EssIm(G)$ is a full triangulated subcategory of $D(R)$ closed under isomorphisms. We prove the result by induction on the number $\sharp(X_\bullet)$ of non-zero homology groups of $X_\bullet$.
		
		If $\sharp(X_\bullet)=0$, then $X_\bullet\cong 0=G(0)$ in $D(R)$, so $X_\bullet\in\EssIm(G)$.
		
		If $\sharp(X_\bullet)=1$, suppose $X_\bullet$ has only non-zero homology at degree $b$. Then $X_\bullet\cong \H_b(X_\bullet)[b]$ in $D(R)$. By assumption, $\H_b(X_\bullet)\cong G(N_b)$, so $X_\bullet\cong G(N_b[b])$. Hence $X_\bullet\in\EssIm(G)$.
		
		Assume the result holds for $\sharp(X_\bullet)=k$. Let $\sharp(X_\bullet)=k+1$. Let $b=\max\{i\mid \H_i(X_\bullet)\neq 0\}$. Consider the distinguished triangle in $D(R)$:
		\[
		\tau_{\leq b-1}X_\bullet\rightarrow X_\bullet\rightarrow \H_b(X_\bullet)[b]\rightarrow \tau_{\leq b-1}X_\bullet[1].
		\]
		By the induction hypothesis, $\tau_{\leq b-1}X_\bullet\in\EssIm(G)$, and $\H_b(X_\bullet)[b]\in\EssIm(G)$ by the previous case. Since $\EssIm(G)$ is closed under cones by Lemma \ref{L1}, it follows that $X_\bullet\in\EssIm(G)$.
	\end{proof}
	
	\begin{lemma}\label{l5}
		Let $\lambda:R\rightarrow S$ be a homological ring epimorphism, and let $X_\bullet\in\EssIm(G)$. Then the unit map $\eta_{X_\bullet}:X_\bullet\rightarrow GF(X_\bullet)$ is an isomorphism in $D(R)$.
	\end{lemma}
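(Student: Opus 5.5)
Since $X_\bullet\in\EssIm(G)$, we fix $Y_\bullet\in D(S)$ and an isomorphism $\alpha:X_\bullet\to G(Y_\bullet)$ in $D(R)$. The unit $\eta$ is a natural transformation $\Id_{D(R)}\Rightarrow GF$. Naturality gives $GF(\alpha)\circ\eta_{X_\bullet}=\eta_{G(Y_\bullet)}\circ\alpha$. Here $\alpha$ and $GF(\alpha)$ are isomorphisms, so it suffices to prove that $\eta_{G(Y_\bullet)}$ is an isomorphism. The statement is thereby reduced to objects lying literally in the image of $G$.

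For $Y_\bullet\in D(S)$, the adjunction $(F,G)$ from the Corollary following Lemma \ref{dsp} supplies the counit $\varepsilon:FG\Rightarrow\Id_{D(S)}$. It also supplies the triangle identity $G(\varepsilon_{Y_\bullet})\circ\eta_{G(Y_\bullet)}=\Id_{G(Y_\bullet)}$. We therefore aim to show that $\varepsilon_{Y_\bullet}$ is an isomorphism; then $\eta_{G(Y_\bullet)}=G(\varepsilon_{Y_\bullet})^{-1}$ is an isomorphism as well. By the standard categorical fact, the counit of an adjunction is an isomorphism exactly when the right adjoint is fully faithful. The Remark states that $G$ is fully faithful because $\lambda$ is a homological ring epimorphism, so this finishes the argument.

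Alternatively, we can check the counit concretely. Here $FG(Y_\bullet)=S\otimes_R^{\mathbb{L}}Y_\bullet\cong (S\otimes^{\mathbb{L}}_R S)\otimes^{\mathbb{L}}_S Y_\bullet$, and the counit is induced by the multiplication map $S\otimes_R^{\mathbb{L}}S\to S$. This map is an isomorphism in $D(S)$ by the Remark, since $\Tor_n^R(S,S)=0$ for $n\ge1$ and $S\otimes_RS\cong S$. The step needing the most care is the identification of the counit with $\mu\otimes^{\mathbb{L}}_S Y_\bullet$ under the associativity isomorphism. For this reason I will rely on the fully-faithfulness criterion rather than on that explicit identification.
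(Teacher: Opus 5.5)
Your proof is correct and follows the paper's argument. The counit $\varepsilon_{Y_\bullet}$ is an isomorphism because $G$ is fully faithful, the triangle identity then forces $\eta_{G(Y_\bullet)}$ to be an isomorphism, and naturality along $X_\bullet\cong G(Y_\bullet)$ transfers this to $\eta_{X_\bullet}$; you make that last naturality step explicit where the paper leaves it implicit, and you write the triangle identity in the correct composition order.
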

	\begin{proof}
		Let $Y_\bullet\in D(S)$. Since $G$ is fully faithful, the counit $\epsilon_{Y_\bullet}:F G(Y_\bullet)\rightarrow Y_\bullet$ is an isomorphism. Now let $X_\bullet\in\EssIm(G)$. Then there exists $Y_\bullet\in D(S)$ such that $X_\bullet\cong G(Y_\bullet)$. The triangle identity for the adjunction gives $\eta_{G(Y_\bullet)} G(\epsilon_{Y_\bullet})=\Id_{G(Y_\bullet)}$. Since $\epsilon_{Y_\bullet}$ is an isomorphism, $G(\epsilon_{Y_\bullet})$ is also an isomorphism, so $\eta_{G(Y_\bullet)}$ is an isomorphism. Thus $\eta_{X_\bullet}$ is an isomorphism.
	\end{proof}
	
	\begin{corollary}\label{c6}
		Let $\lambda:R\rightarrow S$ be a homological ring epimorphism and $P_\bullet\in C^b(R\text{-}\Proj)$. Suppose there exist $N_i\in S\text{-}\Mod$ such that $\H_i(P_\bullet)\cong G(N_i)$ in $R\text{-}\Mod$ for each $i\in\mathbb{Z}$. Then $\H_i(S\otimes_R P_\bullet)\cong N_i$ in $S\text{-}\Mod$. Moreover, if $\H_i(P_\bullet)\in S\text{-}\Mod$, then $\H_i(S\otimes_R P_\bullet)\cong \H_i(P_\bullet)$ in $S\text{-}\Mod$.
	\end{corollary}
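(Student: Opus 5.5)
The plan is to chain Lemma \ref{l4}, Lemma \ref{l5} and the fully faithfulness of $G$, and then read off homology.

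First, since $P_\bullet$ is a bounded complex, only finitely many $\H_i(P_\bullet)$ are nonzero. By hypothesis each $\H_i(P_\bullet)\cong G(N_i)$, where we take $N_i=0$ whenever $\H_i(P_\bullet)=0$. Lemma \ref{l4} therefore gives $P_\bullet\in\EssIm(G)$. Lemma \ref{l5} then says that the unit $\eta_{P_\bullet}:P_\bullet\to GF(P_\bullet)$ is an isomorphism in $D(R)$.

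Next we identify $F(P_\bullet)$ concretely. Because $P_\bullet$ is a bounded complex of projective $R$-modules, it is K-projective, so $F(P_\bullet)=S\otimes_R^{\mathbb{L}}P_\bullet$ is represented by the honest complex $S\otimes_R P_\bullet$. The functor $G$ is restriction of scalars, which is exact, so it does not change underlying homology groups. Hence
\[
\H_i(P_\bullet)\cong \H_i(GF(P_\bullet))=\H_i(S\otimes_R P_\bullet)
\]
as $R$-modules.

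The main obstacle is upgrading this to an isomorphism of $S$-modules with $N_i$, since an $R$-isomorphism between $S$-modules need not be $S$-linear a priori. For a homological epimorphism this is harmless: $\lambda$ is a ring epimorphism, so restriction $S\text{-}\Mod\to R\text{-}\Mod$ is fully faithful. Thus any $R$-linear map between $S$-modules is automatically $S$-linear.

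Concretely, we obtain an $R$-isomorphism $G(N_i)\cong\H_i(P_\bullet)\cong G(\H_i(S\otimes_R P_\bullet))$ between restrictions of $S$-modules. Full faithfulness of $\lambda_\ast$ then shows it comes from an $S$-isomorphism $N_i\cong \H_i(S\otimes_RP_\bullet)$.

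Alternatively, one can argue in the derived category. From $GF(P_\bullet)\cong P_\bullet\cong G(Y_\bullet)$ and full faithfulness of $G$, we get $F(P_\bullet)\cong Y_\bullet$ in $D(S)$. This reduces the problem to knowing $\H_i(Y_\bullet)\cong N_i$ for the $Y_\bullet$ constructed in Lemma \ref{l4}, and that again comes down to the module-level statement above.

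The "moreover" part is the special case $N_i=\H_i(P_\bullet)$ viewed as an $S$-module, since then $G(N_i)=\H_i(P_\bullet)$.
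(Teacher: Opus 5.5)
Your proposal is correct and follows the paper's proof essentially step for step. Lemma \ref{l4} and Lemma \ref{l5} make the unit $P_\bullet\to GF(P_\bullet)=S\otimes_R P_\bullet$ an isomorphism in $D(R)$; exactness of restriction gives $\H_i(P_\bullet)\cong G(\H_i(S\otimes_R P_\bullet))$; and full faithfulness of restriction along the ring epimorphism upgrades $G(N_i)\cong G(\H_i(S\otimes_R P_\bullet))$ to an $S$-isomorphism (your extra derived-category paragraph is unnecessary but harmless).
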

	\begin{proof}
		Since each $P_i$ is projective, $F(P_\bullet)=S\otimes_R^{\mathbb{L}}P_\bullet=S\otimes_R P_\bullet$. By Lemma \ref{l4}, $P_\bullet\in\EssIm(G)$. By Lemma \ref{l5}, the unit morphism $P_\bullet\rightarrow GF(P_\bullet)$ is an isomorphism in $D(R)$. Consequently,
		\[
		\H_i(P_\bullet)\cong \H_i(GF(P_\bullet))\cong G(\H_i(F(P_\bullet))).
		\]
		By assumption, $\H_i(P_\bullet)\cong \lambda_\ast(N_i)$, which implies that $G(\H_i(F(P_\bullet)))\cong G(N_i)$ as $R$-modules. Since $G:S\text{-}\Mod\rightarrow R\text{-}\Mod$ is fully faithful, we obtain $H_i(F(P_\bullet))\cong N_i$ as $S$-modules. Thus $\H_i(S\otimes_R P_\bullet)\cong N_i$ as $S$-modules.
	\end{proof}

 Let $X = (\cdots \xrightarrow{\partial_{i+2}} X_{i+1} \xrightarrow{\partial_{i+1}} X_i \xrightarrow{\partial_i} X_{i-1} \xrightarrow{\partial_{i-1}} \cdots)$ be a complex of  $R$-modules. Denote by
	\[
	\begin{cases}
		\sup X = \sup\{i \in \mathbb{Z} \mid X_i
		\not= 0\}, \\
		\inf X = \inf\{i \in \mathbb{Z} \mid X_i
		\not= 0\},
	\end{cases}
	\qquad
	\begin{cases}
		\hsup X = \sup\{i \in \mathbb{Z} \mid \mathrm{H}_i(X)
		\not= 0\}, \\
		\hinf X = \inf\{i \in \mathbb{Z} \mid \mathrm{H}_i(X)
	\not= 0\}.
	\end{cases}
	\]
		Next, we recall the notions of the quasi-projective dimension of a module and the quasi-global dimension of a ring.

\begin{definition}\label{def:qpd}\cite{gheibi2020}
	Let $R$ be a ring and $M$ an $R$-module. A bounded below complex $P_\bullet$ of projective $R$-modules is called a \emph{ quasi-projective resolution} of $M$ if $\H_i(P_\bullet)\cong M^{\oplus a_i}$ for some  non-negative   integers $a_i$, not all zero. The {\em quasi-projective dimension} of $M$ is
	\[
	\qpd_R(M)=\inf\{\sup P_\bullet - \hsup P_\bullet \mid P_\bullet \text{ is a quasi-projective resolution of } M\}.
	\]
	If no such resolution exists, set $\qpd_R(M)=\infty$. So we always have $\qpd_R(0)=\infty$.
\end{definition}

\begin{definition}\label{def:qgld}\cite{chen2025}
	The {\em quasi-global dimension} of $R$ is
	\[
	\qgld(R)=\sup\{\qpd_R(M)\mid M\in R\text{-}\mod\}.
	\]
\end{definition}
	
	\begin{proposition}\label{p7}
		Let $\lambda:R\rightarrow S$ be a homological ring epimorphism and $M$ an $S$-module. Then $\qpd_S(M)\leq \qpd_R(M)$.
	\end{proposition}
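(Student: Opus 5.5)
If $\qpd_R(M)=\infty$ there is nothing to prove, so assume $\qpd_R(M)=n<\infty$. Then $M\neq 0$, and we may pick a quasi-projective resolution $P_\bullet$ of $M$ over $R$ with $\sup P_\bullet-\hsup P_\bullet=n$. The plan is to show that $S\otimes_R P_\bullet$ is a quasi-projective resolution of $M$ over $S$ with the same homology pattern and $\sup(S\otimes_R P_\bullet)\le \sup P_\bullet$. This gives $\qpd_S(M)\le n$.

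\textbf{Step 1 (reduction to bounded complexes).} Corollary \ref{c6} is stated for $P_\bullet\in C^b(R\text{-}\Proj)$, while Definition \ref{def:qpd} only asks for bounded below complexes. Since $n<\infty$, $P_\bullet$ is bounded above, and $\H_i(P_\bullet)\cong M^{\oplus a_i}$ with only finitely many $a_i\neq 0$ would be needed to apply Lemma \ref{l4}. Note that finitely many nonzero homologies already suffice for Lemma \ref{l4}; boundedness of $P_\bullet$ is then used only to identify $F(P_\bullet)$ with $S\otimes_R P_\bullet$. For that, $P_\bullet$ need only be K-projective, and a bounded above complex of projectives is K-projective. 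I would first check that the argument of Corollary \ref{c6} goes through verbatim in this setting.

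If infinitely many $a_i$ were nonzero, I would instead take the brutal truncation $P_{\ge m}$ for $m$ far below $\hsup P_\bullet$. This changes the homology only in degree $m$, so it is not a clean fix. I therefore expect this bookkeeping to be the main (if minor) obstacle. The natural convention, as in \cite{gheibi2020}, is that the $a_i$ are eventually zero, i.e.\ the complex is bounded in the relevant sense, and I would adopt that reading.

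\textbf{Step 2 (homology over $S$).} Each $\H_i(P_\bullet)\cong M^{\oplus a_i}=G(M^{\oplus a_i})$ as $R$-modules, where $M^{\oplus a_i}$ carries its $S$-module structure. Corollary \ref{c6} then gives $\H_i(S\otimes_R P_\bullet)\cong M^{\oplus a_i}$ as $S$-modules. Each $S\otimes_R P_i$ is a projective $S$-module, so $S\otimes_R P_\bullet$ is a bounded complex of projective $S$-modules whose homology is $M^{\oplus a_i}$ with not all $a_i$ zero. It is therefore a quasi-projective resolution of $M$ over $S$.

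\textbf{Step 3 (comparison of invariants).} The homology is unchanged, so $\hsup(S\otimes_R P_\bullet)=\hsup P_\bullet$. Since $S\otimes_R P_i=0$ whenever $P_i=0$, we have $\sup(S\otimes_R P_\bullet)\le\sup P_\bullet$. Hence
\[
\qpd_S(M)\le \sup(S\otimes_R P_\bullet)-\hsup(S\otimes_R P_\bullet)\le \sup P_\bullet-\hsup P_\bullet=n .
\]
Taking the infimum over all such $P_\bullet$ proves $\qpd_S(M)\le\qpd_R(M)$.
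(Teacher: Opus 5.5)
Your proof is correct and follows the paper's argument: you apply Corollary \ref{c6} to see that $S\otimes_R P_\bullet$ is a quasi-projective resolution of $M$ over $S$ with the same homology, then note that $\sup$ can only drop while $\hsup$ is unchanged. Your Step 1 worry is unnecessary: a resolution realizing a finite value has $\sup P_\bullet<\infty$ and is bounded below by definition, so it already lies in $C^b(R\text{-}\Proj)$ and has only finitely many nonzero homology modules. (Also, in homological grading it is bounded \emph{below} complexes of projectives that are K-projective, not bounded above ones.)
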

	\begin{proof}
		We may assume that $\qpd_R(M)<\infty$. Let $P_\bullet\in C^b(R\text{-}\Proj)$ be a finite quasi-projective resolution of $M$ as an $R$-module. Then $\H_i(P_\bullet)\cong M^{\oplus a_i}\in S\text{-}\Mod$ for some non-negative integers $a_i$, not all zero. It follows by Corollary \ref{c6} that $\H_i(S\otimes_R P_\bullet)\cong M^{\oplus a_i}\in S\text{-}\Mod$. Since $S\otimes_R P_\bullet\in C^b(S\text{-}\Proj)$, this complex is a finite quasi-projective resolution of $M$ as an $S$-module. Therefore
		\[
		\sup(S\otimes_R P_\bullet)\leq \sup(P_\bullet).
		\]
		Since $\H_i(S\otimes_R P_\bullet)\cong M^{\oplus a_i}$, the non-zero homology groups of $S\otimes_R P_\bullet$ are the same as those of $P_\bullet$. Hence $\hsup(S\otimes_R P_\bullet)=\hsup(P_\bullet)$. Consequently,
		\[
		\sup(S\otimes_R P_\bullet)-\hsup(S\otimes_R P_\bullet)\leq \sup(P_\bullet)-\hsup(P_\bullet).
		\]
		Taking the infimum over all such resolutions gives $\qpd_S(M)\leq \qpd_R(M)$.
	\end{proof}
	
	\begin{corollary}\label{c8}
		Let $\lambda:R\rightarrow S$ be a homological ring epimorphism such that $S$ is a finitely generated $R$-module. Then $\qgld(S)\leq \qgld(R)$.
	\end{corollary}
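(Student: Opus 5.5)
The plan is to reduce Corollary \ref{c8} to Proposition \ref{p7}, applied one module at a time. We compare the two suprema defining $\qgld(S)$ and $\qgld(R)$. If $\qgld(R)=\infty$ there is nothing to prove, so we may assume $\qgld(R)<\infty$. Then it suffices to show that $\qpd_S(M)\leq \qgld(R)$ for every $M\in S\text{-}\mod$; taking the supremum over $M$ gives the claim.

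First fix a finitely generated $S$-module $M$. We show that its restriction $\lambda_\ast(M)$ lies in $R\text{-}\mod$. Choose an epimorphism $S^{\oplus n}\to M$ of $S$-modules. Restricting along $\lambda$, it stays a surjection of $R$-modules. Since $S$ is finitely generated as an $R$-module by hypothesis, so is $S^{\oplus n}$, and therefore so is its quotient $M$. Hence $\qpd_R(M)\leq \qgld(R)$ by Definition \ref{def:qgld}.

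Next apply Proposition \ref{p7}, which uses only that $\lambda$ is a homological ring epimorphism and that $M$ is an $S$-module. It gives $\qpd_S(M)\leq \qpd_R(M)\leq\qgld(R)$. Taking the supremum over all $M\in S\text{-}\mod$ yields $\qgld(S)\leq \qgld(R)$.

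There are two places where the argument needs care. The first is the convention $\qpd(0)=\infty$. If the zero module is included in the supremum, both sides of the inequality are $\infty$ and the statement is trivial. Under the intended reading, where the supremum runs over nonzero modules, a nonzero $S$-module restricts to a nonzero $R$-module, so the comparison still goes term by term. The second is the finite-generation step above. That step is the only point where the hypothesis that $S$ is finitely generated over $R$ enters, and it is what keeps the restriction functor inside the class of modules appearing in Definition \ref{def:qgld}.
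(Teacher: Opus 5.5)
Your proposal is correct and follows the paper's argument: the paper likewise deduces the corollary directly from Proposition \ref{p7} by taking the supremum over finitely generated $S$-modules. Your explicit check that a finitely generated $S$-module is finitely generated over $R$ (the only place the hypothesis on $S$ is used) and your remark on the $\qpd(0)$ convention spell out what the paper leaves implicit.
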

	\begin{proof}
		This follows immediately from Proposition \ref{p7} by taking the supremum over all finitely generated $S$-modules.
	\end{proof}
	
	\begin{lemma}\label{l9}
		Let $R$ be a ring and $Q_\bullet\in C^b(R\text{-}\Mod)$ satisfying $\pd_R(Q_i)\leq d<\infty$ for some $d\geq 0$. Then there exists $P_\bullet\in C^b(R\text{-}\Proj)$ such that $P_\bullet$ is quasi-isomorphic to $Q_\bullet$ and $\sup P_\bullet\leq \sup Q_\bullet+d$.
	\end{lemma}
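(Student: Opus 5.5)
We argue by induction on the length of $Q_\bullet$, that is, on the number of non-zero terms, using the brutal truncations and the mapping cone. The target statement is slightly stronger: we want $P_\bullet\in C^b(R\text{-}\Proj)$ together with a quasi-isomorphism $P_\bullet\to Q_\bullet$, and with $\sup P_\bullet\le \sup Q_\bullet+d$. Write $s=\sup Q_\bullet$ and $t=\inf Q_\bullet$.

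\textbf{Base case.} If $Q_\bullet$ is concentrated in the single degree $s$, so that $Q_\bullet=Q_s[s]$, we take a projective resolution of $Q_s$ of length at most $d$. This exists because $\pd_R Q_s\le d$. Shifted by $s$, it gives a bounded complex of projectives quasi-isomorphic to $Q_\bullet$, living in degrees $s,\dots,s+d$.

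\textbf{Inductive step.} For general $Q_\bullet$, consider the degreewise split short exact sequence of complexes
\[
0\to Q_{\le s-1}\to Q_\bullet\to Q_s[s]\to 0 .
\]
It shows that $Q_\bullet$ is, up to isomorphism, the mapping cone of a chain map $f\colon Q_s[s-1]\to Q_{\le s-1}$, induced by $\partial_s$. By induction we have quasi-isomorphisms
\[
\alpha\colon P'_\bullet\to Q_{\le s-1}\quad\text{with }\sup P'_\bullet\le s-1+d,
\qquad
\beta\colon P''_\bullet\to Q_s[s-1]\quad\text{with }P''_\bullet\text{ in degrees }s-1,\dots,s-1+d,
\]
where $P''_\bullet$ is the shifted projective resolution from the base case.

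We now lift $f\beta$ through $\alpha$. Since $P''_\bullet$ is a bounded complex of projectives, $\Hom_{K(R)}(P''_\bullet,-)$ sends quasi-isomorphisms to isomorphisms. Hence there is a chain map $g\colon P''_\bullet\to P'_\bullet$ with $\alpha g\simeq f\beta$ up to homotopy. Set $P_\bullet=\Cone(g)$. In each degree $n$ this is $P'_n\oplus P''_{n-1}$, so it is a bounded complex of projectives with
\[
\sup P_\bullet\le \max\{\sup P'_\bullet,\ \sup P''_\bullet+1\}\le s+d .
\]

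\textbf{Main obstacle.} The delicate point is producing an actual quasi-isomorphism $\Cone(g)\to\Cone(f)\cong Q_\bullet$. The difficulty is that the square $\alpha g\simeq f\beta$ commutes only up to homotopy. I would handle this in one of two ways.

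\emph{First route.} Choose a homotopy $h$ with $\alpha g-f\beta=\partial h+h\partial$. The map of cones $(x,y)\mapsto(\alpha x+hy,\ \beta y)$ is then a chain map. It is a quasi-isomorphism by the five lemma applied to the long exact homology sequences of the two cone sequences.

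\emph{Second route.} Argue in $D(R)$ via the octahedral axiom / TR3. This yields $P_\bullet\cong Q_\bullet$ in $D(R)$, which already suffices for the statement "quasi-isomorphic", because a bounded complex of projectives is K-projective, so an isomorphism in $D(R)$ out of it is represented by a genuine quasi-isomorphism.

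Either route finishes the induction and gives the bound $\sup P_\bullet\le \sup Q_\bullet+d$.
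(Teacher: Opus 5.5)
Your proposal is correct and follows essentially the same route as the paper. Both arguments induct on the number of non-zero terms, split off the top term $Q_s[s]$ via the brutal truncation, lift the connecting map to the homotopy category using that a bounded complex of projectives is K-projective, and take the mapping cone, which lives in degrees $\le s+d$. Your explicit homotopy-corrected chain map $(x,y)\mapsto(\alpha x+hy,\beta y)$, combined with the five lemma, also supplies a detail the paper only asserts: that the resulting cone admits a genuine quasi-isomorphism to $Q_\bullet$.
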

	\begin{proof}
		We construct $P_\bullet$ by induction on the number $\star(Q_\bullet)$ of non-zero terms of $Q_\bullet$.
		
		If $\star(Q_\bullet)=1$, suppose $Q_s\neq 0$. Then there is a projective resolution $P_\bullet$ of $Q_s$ of length at most $d$:
	$${\xymatrix{
			P_\bullet:&	0\ar[r]^{}\ar[d]&P_d\ar[r]^{}\ar[d]&\cdots\ar[r]^{}
			&P_1\ar[d]\ar[r]^{}&P_0\ar@{->>}[d]\ar[r]^{}&0\\
			&	0\ar[r]^{}&0\ar[r]^{}&\cdots\ar[r]^{}
			& 0\ar[r]^{}
			&Q_s\ar[r]^{}&0.}}$$
		This gives a complex $P_\bullet$ with $\sup P_\bullet\leq s+d$.
		
		Assume the result holds for $\star(Q_\bullet)=k$. Let $\star(Q_\bullet)=k+1$, and let $s=\sup Q_\bullet$. Consider the distinguished triangle in $D(R)$:
		\[
		Q_{\leq s-1}\rightarrow Q_\bullet\rightarrow Q_s[s]\xrightarrow{\delta} Q_{\leq s-1}[1].
		\]
		By the induction hypothesis, there is a quasi-isomorphism $f:P'_\bullet\cong Q_{\leq s-1}$ such that $\sup P'_\bullet\leq (s-1)+d$ and $P'_\bullet\in C^b(R\text{-}\Proj)$. By the case $\star(Q_\bullet)=1$, there is a quasi-isomorphism $P^s_\bullet\cong Q_s[s]$ such that $\sup P^s_\bullet\leq s+d$ and $P^s_\bullet\in C^b(R\text{-}\Proj)$.
		
		Set $\widetilde{\delta}:=g\delta(f[1])^{-1}:P^s_\bullet\rightarrow Q_s[s]\rightarrow Q_{\leq s-1}[1]\rightarrow P'_\bullet[1]$. Since $P^s_\bullet\in C^b(R\text{-}\Proj)$, by standard results
on the homotopy category, there exists a morphism $\widetilde{d}\in \Hom_{K(R)}(P^s_\bullet,P'_\bullet[1])$ whose image in $D(R)$ is $\widetilde{\delta}$. Consider the split short exact sequence
		\[
		0\rightarrow P'_\bullet\rightarrow \Cone(\widetilde{d}[-1])\rightarrow P^s_\bullet\rightarrow 0
		\]
		of chain complexes. Then $\Cone(\widetilde{d}[-1])\in C^b(R\text{-}\Proj)$ and $\sup \Cone(\widetilde{d}[-1])\leq s+d$. Moreover, there is a quasi-isomorphism $\Cone(\widetilde{d}[-1])\rightarrow Q_\bullet$. Thus $P_\bullet:=\Cone(\widetilde{d}[-1])$ is the required projective complex.
	\end{proof}
	
	\begin{proposition}\label{p10}
		Let $F:S\text{-}\Mod\rightarrow R\text{-}\Mod$ be an exact functor such that for every projective $S$-module $Q$, we have $\pd_R(F(Q))\leq d<\infty$. Then
		\[
		\qpd_R(F(M))\leq \qpd_S(M)+d
		\]
		for any $S$-module $M$.
	\end{proposition}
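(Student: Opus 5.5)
We may assume $\qpd_S(M)<\infty$; the idea is to push a quasi-projective resolution of $M$ through $F$ and then replace it by a projective complex using Lemma \ref{l9}.

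Fix a bounded complex $Q_\bullet\in C^b(S\text{-}\Proj)$ with $\H_i(Q_\bullet)\cong M^{\oplus a_i}$, the $a_i$ not all zero, and $\sup Q_\bullet-\hsup Q_\bullet=\qpd_S(M)$. If the infimum is not attained, take an arbitrary such resolution and pass to the infimum at the end. Here we use the convention of Proposition \ref{p7} that finite quasi-projective resolutions can be taken bounded. If needed, we first trim the top of $Q_\bullet$; this only uses that it is bounded below with finitely many nonzero terms.

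Apply $F$ termwise to get $F(Q_\bullet)\in C^b(R\text{-}\Mod)$. Since $F$ is exact, it commutes with homology, so $\H_i(F(Q_\bullet))\cong F(\H_i(Q_\bullet))\cong F(M^{\oplus a_i})$. An exact functor is additive, so it preserves finite direct sums, and therefore $\H_i(F(Q_\bullet))\cong F(M)^{\oplus a_i}$. Two bounds follow directly. First, $\sup F(Q_\bullet)\le \sup Q_\bullet$. Second, the nonzero homology degrees of $F(Q_\bullet)$ are exactly those $i$ with $a_i\neq 0$, provided $F(M)\neq 0$. If $F(M)=0$ then $\qpd_R(F(M))=\infty$ by convention, so the inequality can fail. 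I would note that the statement implicitly requires $F(M)\neq 0$, or that $F$ is faithful on objects, as happens for restriction along $\lambda$. I would add this as a harmless assumption, or simply observe it is needed. With it, $\hsup F(Q_\bullet)=\hsup Q_\bullet$.

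Every term of $F(Q_\bullet)$ is $F$ of a projective $S$-module, so it has projective dimension at most $d$. Lemma \ref{l9} therefore gives $P_\bullet\in C^b(R\text{-}\Proj)$ quasi-isomorphic to $F(Q_\bullet)$ with $\sup P_\bullet\le \sup F(Q_\bullet)+d\le \sup Q_\bullet+d$. A quasi-isomorphism preserves homology, so $\H_i(P_\bullet)\cong F(M)^{\oplus a_i}$. Hence $P_\bullet$ is a quasi-projective resolution of $F(M)$ with $\hsup P_\bullet=\hsup Q_\bullet$. This yields
\[
\qpd_R(F(M))\le \sup P_\bullet-\hsup P_\bullet\le \sup Q_\bullet-\hsup Q_\bullet+d,
\]
and taking the infimum over $Q_\bullet$ gives the claim.

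The main obstacle is bookkeeping rather than a conceptual difficulty. One point is the degenerate case $F(M)=0$ just discussed. The other is checking that Lemma \ref{l9} applies: it needs a bounded complex with a uniform bound $d$ on the projective dimensions of its terms, and that is exactly what the hypothesis on $F$ provides.
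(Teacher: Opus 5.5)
Your proposal is correct and follows the paper's proof essentially step for step. You push a bounded quasi-projective resolution through the exact functor $F$, observe that $\sup$ does not increase and $\hsup$ is preserved, and then use Lemma \ref{l9} to replace $F(Q_\bullet)$ by a bounded projective complex at a cost of at most $d$.

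Your caveat about $F(M)=0$ is legitimate. The paper disposes of that case by asserting $\qpd_R(F(M))=-\infty$, which is the Gheibi--Jorgensen--Takahashi convention. However, this contradicts the paper's own Definition \ref{def:qpd}, which sets $\qpd_R(0)=\infty$. Under that definition the statement does need $F(M)\neq 0$; the zero functor with $M=S$ is a counterexample otherwise.
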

	\begin{proof}
		We may assume $\qpd_S(M)=n<\infty$. If $F(M)=0$, then $\qpd_R(F(M))=-\infty$, and the result holds.
		
		Now assume $F(M)\neq 0$. Take a finite quasi-projective resolution $P_\bullet\in C^b(S\text{-}\Proj)$ of $M$. Then $\H_i(P_\bullet)\cong M^{\oplus a_i}$ for some non-negative integers $a_i$, not all zero. Set $s=\sup P_\bullet$ and $h=\hsup P_\bullet$. Then $n=s-h$. Since $F$ is exact, $\H_i(F(P_\bullet))\cong F(M)^{\oplus a_i}$. So $\hsup F(P_\bullet)=h$. Since $\pd_R(F(P_i))\leq d$, by Lemma \ref{l9} there exists $Q_\bullet\in C^b(R\text{-}\Proj)$ quasi-isomorphic to $F(P_\bullet)$ such that $\sup Q_\bullet\leq \sup F(P_\bullet)+d$ and $\hsup Q_\bullet=h$. Since $\sup F(P_\bullet)\leq \sup P_\bullet$, we have $\sup Q_\bullet\leq \sup P_\bullet+d=s+d$. Thus $Q_\bullet$ is a finite quasi-projective resolution of $F(M)$, so
		\[
		\qpd_R(F(M))\leq \sup Q_\bullet-\hsup Q_\bullet\leq s+d-h=n+d.
		\]
		This proves the result.
	\end{proof}
	
	\begin{corollary}\label{c11}
		Let $\lambda:R\rightarrow S$ be a ring homomorphism with $\pd_R S=d<\infty$. Then
		\[
		\qpd_R(M)\leq \qpd_S(M)+d
		\]
		for any $S$-module $M$.
	\end{corollary}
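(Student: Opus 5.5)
We apply Proposition \ref{p10} with $F:=\lambda_\ast:S\text{-}\Mod\rightarrow R\text{-}\Mod$, the restriction of scalars. Since $\lambda_\ast(M)$ is just $M$ regarded as an $R$-module, the desired inequality $\qpd_R(M)\leq \qpd_S(M)+d$ is literally $\qpd_R(F(M))\leq \qpd_S(M)+d$. So it is enough to check the two hypotheses of Proposition \ref{p10}: that $F$ is exact, and that $\pd_R(F(Q))\leq d$ for every projective $S$-module $Q$.

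\textbf{Exactness.} Restriction of scalars is exact, because kernels and images of $S$-linear maps are computed on the underlying abelian groups. Nothing more is needed here.

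\textbf{The projective bound.} This is the only step with real content. Let $Q$ be a projective $S$-module. Then $Q$ is a direct summand of a free module $S^{(I)}$ for some index set $I$. Since $\lambda_\ast$ commutes with direct sums, $\lambda_\ast(S^{(I)})\cong ({}_RS)^{(I)}$.
\begin{enumerate}
\item Projective dimension is preserved under arbitrary direct sums: a direct sum of projective resolutions of length at most $d$ is again a projective resolution of length at most $d$. Hence $\pd_R\bigl(({}_RS)^{(I)}\bigr)\leq \pd_R S=d$.
\item Projective dimension does not increase on passing to direct summands, for example because $\Ext^{n}_R(-,N)$ is additive and $\pd_R X\leq d$ holds if and only if $\Ext^{d+1}_R(X,-)=0$. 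Since $\lambda_\ast(Q)$ is an $R$-direct summand of $\lambda_\ast(S^{(I)})$, we get $\pd_R(\lambda_\ast Q)\leq d$.
\end{enumerate}

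With both hypotheses verified, Proposition \ref{p10} applies directly and gives the inequality for every $S$-module $M$. We expect no genuine obstacle. The one point to state explicitly is that $\pd_R S$ here refers to $S$ as a left $R$-module via $\lambda$, which is exactly what the projective bound uses.

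When $\lambda$ is moreover a homological ring epimorphism, we can combine this with Proposition \ref{p7} to obtain the two-sided estimate $\qpd_S(M)\leq \qpd_R(M)\leq \qpd_S(M)+\pd_R S$ announced in the introduction.
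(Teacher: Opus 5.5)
Your proposal is correct and is exactly the paper's argument: apply Proposition \ref{p10} to the restriction functor $\lambda_\ast$, which is exact and sends projective $S$-modules to $R$-modules of projective dimension at most $d$. The only difference is that you spell out the direct-sum and direct-summand reasoning behind the projective-dimension bound, which the paper leaves implicit.
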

	\begin{proof}
		Apply Proposition \ref{p10} to the restriction functor $\lambda_\star:S\text{-}\Mod\rightarrow R\text{-}\Mod$, which is exact and sends projectives to modules of projective dimension at most $d$.
	\end{proof}
	
	\begin{corollary}
		Let $\lambda:R\rightarrow S$ be a homological ring epimorphism with $\pd_R S=d<\infty$. Then
		\[
		\qpd_S(M)\leq \qpd_R(M)\leq \qpd_S(M)+d
		\]
		for any $S$-module $M$.
	\end{corollary}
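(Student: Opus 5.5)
This corollary is the conjunction of two results already established, so the plan is simply to invoke them in turn. The hypotheses say that $\lambda:R\to S$ is a homological ring epimorphism and that $\pd_R S=d<\infty$. For an arbitrary $S$-module $M$, the left inequality $\qpd_S(M)\leq \qpd_R(M)$ is exactly Proposition \ref{p7}, which only uses that $\lambda$ is a homological ring epimorphism. That proposition passes from an $R$-quasi-projective resolution $P_\bullet$ of $M$ to $S\otimes_R P_\bullet$, using Corollary \ref{c6} to identify the homology.

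For the right inequality $\qpd_R(M)\leq \qpd_S(M)+d$, I would apply Corollary \ref{c11}. It needs only that $\lambda$ is a ring homomorphism with $\pd_R S=d$. That corollary is Proposition \ref{p10} applied to the exact restriction functor $\lambda_\ast$.

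It is worth verifying that this hypothesis of Proposition \ref{p10} holds: $\lambda_\ast$ must send every projective $S$-module to an $R$-module of projective dimension at most $d$. A projective $S$-module is a direct summand of a free module $S^{(I)}$. Its restriction is a summand of $S^{(I)}$ viewed as an $R$-module. Since projective dimension is preserved under arbitrary direct sums and does not increase under direct summands, this restriction has $\pd_R\leq \pd_R S=d$.

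Concatenating the two inequalities gives the claim. There is one degenerate case to handle: for $M=0$, all quantities are $\infty$ by convention, so the statement holds trivially. Likewise, if $\qpd_R(M)=\infty$ or $\qpd_S(M)=\infty$, the corresponding inequalities hold trivially. No step here is a real obstacle; the only care needed is the direct-summand and direct-sum check on projective dimension above, and consistency of the infinite conventions.
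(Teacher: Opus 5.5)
Your proposal is correct and matches the paper's proof, which likewise just combines Proposition \ref{p7} for the left inequality with Corollary \ref{c11} for the right inequality. Your extra checks are accurate and only make explicit what the paper leaves implicit: that restriction sends projective $S$-modules to $R$-modules of projective dimension at most $d$ (as direct summands of direct sums of copies of $S$), and that the case $M=0$ is consistent with the convention $\qpd(0)=\infty$.
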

	\begin{proof}
		This follows directly from Proposition \ref{p7} and Corollary \ref{c11}.
	\end{proof}
	
	\section{2-out-of-3 properties for modules with finite quasi-projective dimensions}
	
	In this section, we investigate the class of modules with finite quasi-projective dimension. In particular, we show that this class does not satisfy any of the standard 2-out-of-3 properties for short exact sequences, in contrast to the class of modules of finite projective dimension.
	
	Throughout this section, let $K$ be a field, $q\in K^{\star}$ not a root of unity, and define
	\[
	A_q = K\langle x,y\rangle/(x^2,y^2,yx+qxy).
	\]
	Then $A_q$ is a local self-injective algebra with basis $\{1,x,y,xy\}$ over $K$, and its radical is $J:=\rad(A_q)=(x,y,xy)$.
	
	\begin{lemma}\label{l13}
		Let $S$ be the unique simple module of $A_q$. Then $\qpd_{A_q}(S)=0$.
	\end{lemma}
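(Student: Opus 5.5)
The plan is to exhibit an explicit quasi-projective resolution $P_\bullet$ of $S$ with $\sup P_\bullet=\hsup P_\bullet$, and then to observe that $\qpd$ is never negative. For the lower bound: if $P_\bullet$ is any quasi-projective resolution of a nonzero module, then $\H_{\hsup P_\bullet}(P_\bullet)\neq 0$, so $P_{\hsup P_\bullet}\neq 0$. Hence $\sup P_\bullet\geq \hsup P_\bullet$, and Definition \ref{def:qpd} gives $\qpd_{A_q}(S)\geq 0$. So everything reduces to constructing a bounded complex of free $A_q$-modules whose homology is concentrated in copies of $S$ and whose top homology sits in its top degree.

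I will use the Koszul-type complex
\[
0\rightarrow A_q\xrightarrow{\partial_2}A_q^{2}\xrightarrow{\partial_1}A_q\rightarrow 0
\]
in degrees $2,1,0$, with $\partial_1(a,b)=ax+by$ and $\partial_2(c)=(cy,\,qcx)$. All maps are right multiplications, so they are left $A_q$-linear. The relation $yx+qxy=0$ gives $\partial_1\partial_2(c)=c(yx+qxy)=0$, so this is a complex.

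Next I compute the homology in each degree using the basis $\{1,x,y,xy\}$.
\begin{itemize}
\item Degree $0$: $\operatorname{Im}\partial_1=A_qx+A_qy=J$, so $\H_0=A_q/J\cong S$.
\item Degree $2$: $\ker\partial_2=\{c\mid cx=0=cy\}$, which is the socle $Kxy$. Since $J\cdot xy=0$, this is $\cong S$.
\item Degree $1$: since $\dim_K\operatorname{Im}\partial_1=3$, we get $\dim_K\ker\partial_1=5$. Evaluating $\partial_2$ on the basis gives $\operatorname{Im}\partial_2=\operatorname{span}\{(y,qx),(xy,0),(0,xy)\}$, because $\partial_2(y)=(0,-q^2xy)$ and $\partial_2(xy)=0$. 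Therefore $\dim_K\H_1=2$. Concretely, $\ker\partial_1$ is spanned by $(x,0),(xy,0),(0,y),(0,xy),(y,qx)$, so $\H_1$ is spanned by the classes of $(x,0)$ and $(0,y)$.
\end{itemize}

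The step needing care is checking that $\H_1$ is semisimple, i.e. $\H_1\cong S^{\oplus 2}$ rather than an indecomposable module of length $2$. For this I check that $x$ and $y$ kill both generating classes modulo $\operatorname{Im}\partial_2$:
\begin{itemize}
\item $x(x,0)=0$ and $y(x,0)=(-qxy,0)\in\operatorname{Im}\partial_2$;
\item $x(0,y)=(0,xy)\in\operatorname{Im}\partial_2$ and $y(0,y)=0$.
\end{itemize}
Hence $J\H_1=0$, so $\H_1\cong S^{\oplus 2}$.

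Thus $P_\bullet$ is a quasi-projective resolution of $S$ with homologies $S,S^{\oplus2},S$ and $\sup P_\bullet=2=\hsup P_\bullet$. This gives $\qpd_{A_q}(S)\leq 0$, and combined with the lower bound, $\qpd_{A_q}(S)=0$. I do not expect the hypothesis that $q$ is not a root of unity to be needed for this particular statement; only $q\neq 0$ is used, to write the relation.
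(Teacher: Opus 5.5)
Your proposal is correct and follows the paper's proof: it uses the same Koszul-type complex $A_q\xrightarrow{\partial_2}A_q^{2}\xrightarrow{\partial_1}A_q$ and the same basis computations, which give homology $S$, $S^{\oplus 2}$, $S$ and $\sup P_\bullet=\hsup P_\bullet=2$. Your explicit check that $J\H_1=0$, your argument that $\qpd_{A_q}(S)\ge 0$, and your remark that only $q\neq 0$ is used all fill in points the paper leaves implicit.
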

	\begin{proof}
		Consider the complex
		\[
		P_\bullet:\cdots\rightarrow 0\rightarrow A_q\xrightarrow{d_2} A_q^2\xrightarrow{d_1} A_q\rightarrow 0,
		\]
		where $d_2:A_q\rightarrow A_q^2$ is given by $c\mapsto (cy,qcx)$, and $d_1:A_q^2\rightarrow A_q$ is given by $(a,b)\mapsto ax+by$. These are $A_q$-homomorphisms and $d_2d_1=0$. We show that $P_\bullet$ is a quasi-projective resolution of $S$.
		
		For $H_0(P_\bullet)$, since $\Im(d_1)=A_qx+A_qy=J$, we have $\H_0(P_\bullet)=A_q/J\cong S$.
		
		For $H_1(P_\bullet)$, we compute $\Ker(d_1)$ and $\Im(d_2)$. Let $a=\alpha_0+\alpha_1x+\alpha_2y+\alpha_3xy$ and $b=\beta_0+\beta_1x+\beta_2y+\beta_3xy$. The condition $d_1(a,b)=0$ gives $\alpha_0=\beta_0=\beta_1-q\alpha_2=0$. Thus $\Ker(d_1)$ has basis $\{(x,0),(0,y),(y,qx),(xy,0),(0,xy)\}$. Meanwhile, $\Im(d_2)$ has basis $\{(y,qx),(xy,0),(0,xy)\}$. Hence $H_1(P_\bullet)$ has basis $\{\overline{(x,0)},\overline{(0,y)}\}$, and $J H_1(P_\bullet)=0$, so $H_1(P_\bullet)\cong S^2$.
		
		For $H_2(P_\bullet)$, if $d_2(c)=0$ for $c=\delta_0+\delta_1x+\delta_2y+\delta_3xy$, then $\delta_0=\delta_1=\delta_2=0$. Hence $\Ker(d_2)=Kxy\cong S$, so $H_2(P_\bullet)\cong S$.
		
		Thus $P_\bullet$ is a quasi-projective resolution of $S$ with $\sup P_\bullet=\hsup P_\bullet=2$, so $\qpd_{A_q}(S)=0$.
	\end{proof}
	
	\begin{lemma}\label{l14} \cite[Section 4.1]{Erdmann2017}
		Let $\lambda\in K^{\star}$. Define $C(\lambda):=A_q(x+\lambda y)=\langle xy,x+\lambda y\rangle_K$. Then:
		\begin{enumerate}
			\item $\dim_K(C(\lambda))=2$, and hence $C(\lambda)\notin A_q\text{-}\Proj$.
			\item $\Omega^m(C(\lambda))\cong C(q^{-m}\lambda)$.
			\item $\Ext_{A_q}^{\geq 2}(C(\lambda),C(\lambda))=0$ and $\Ext_{A_q}^{1}(C(\lambda),C(\lambda))\neq 0$.
		\end{enumerate}
	\end{lemma}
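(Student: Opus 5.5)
The plan is to verify the three statements by direct computation in the basis $\{1,x,y,xy\}$ of $A_q$. The two tools are the identity $yx=-qxy$ and the standard description of Ext over a self-injective algebra through stable Hom and syzygies. Throughout I use that $A_q$ is local, so finitely generated projective modules are free, of $K$-dimension divisible by $4$.

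\emph{Step 1 (item 1).} I compute $a(x+\lambda y)$ for $a$ running through the basis:
\[
1\cdot(x+\lambda y)=x+\lambda y,\quad x(x+\lambda y)=\lambda xy,\quad y(x+\lambda y)=yx=-q\,xy,\quad xy(x+\lambda y)=0.
\]
Hence $C(\lambda)=\langle x+\lambda y,\,xy\rangle_K$, and $\dim_K C(\lambda)=2$. Since $2$ is not a multiple of $4$, $C(\lambda)$ is not free and therefore not projective.

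\emph{Step 2 (item 2).} The map $\pi_\lambda:A_q\to C(\lambda)$, $a\mapsto a(x+\lambda y)$, is a projective cover, so its kernel $\Omega C(\lambda)$ has dimension $2$. The kernel contains $xy$. For $\alpha x+\beta y$ I compute
\[
(\alpha x+\beta y)(x+\lambda y)=(\alpha\lambda-\beta q)\,xy .
\]
This vanishes for $\alpha=1$, $\beta=q^{-1}\lambda$. Hence $\Ker\pi_\lambda=\langle x+q^{-1}\lambda y,\,xy\rangle_K=C(q^{-1}\lambda)$, and induction on $m$ gives $\Omega^m C(\lambda)\cong C(q^{-m}\lambda)$. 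This also yields the presentation $C(\mu)\cong A_q/C(q^{-1}\mu)$.

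\emph{Step 3 (item 3).} Since $A_q$ is self-injective, for $m\ge1$ we have
\[
\Ext^m_{A_q}(C(\lambda),C(\lambda))\cong\underline{\Hom}_{A_q}(\Omega^mC(\lambda),C(\lambda))=\underline{\Hom}(C(\mu),C(\lambda)),\qquad \mu=q^{-m}\lambda .
\]
I first determine $\Hom(C(\mu),C(\lambda))$ using $C(\mu)\cong A_q/A_q(x+q^{-1}\mu y)$. Such homomorphisms correspond to elements $v\in C(\lambda)$ with $(x+q^{-1}\mu y)v=0$. Writing $v=\alpha(x+\lambda y)+\beta xy$ gives $(x+q^{-1}\mu y)v=\alpha(\lambda-\mu)xy$.
\begin{itemize}
\item If $\mu\ne\lambda$, which holds for all $m\ge1$ because $q$ is not a root of unity, then $\alpha=0$. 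So every map sends $x+\mu y\mapsto\beta xy$ and lands in the socle.
\end{itemize}

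Next I decide which of these socle maps factor through a projective; it suffices to test factorization through $\pi_\lambda$. By self-injectivity, a map $g:C(\mu)\to A_q$ extends to $A_q\to A_q$, so it is right multiplication by some $r=r_0+(\text{radical terms})$. Then $\pi_\lambda g$ sends
\[
x+\mu y\ \mapsto\ (x+\mu y)\,r\,(x+\lambda y)=r_0(\lambda-q\mu)\,xy .
\]
\begin{itemize}
\item If $\lambda\ne q\mu$, i.e.\ $q^{1-m}\ne1$, which holds for $m\ge2$, every socle map factors through $\pi_\lambda$. Hence $\underline{\Hom}=0$ and $\Ext^{\ge2}=0$.
\item If $m=1$, then $\lambda-q\mu=0$, so no nonzero socle map factors. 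The map $x+q^{-1}\lambda y\mapsto xy$ is then a nonzero element of $\Ext^1(C(\lambda),C(\lambda))$.
\end{itemize}

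The main obstacle is this last step, correctly identifying the morphisms that factor through projectives. The key point is that only the constant term $r_0$ of $r$ contributes, since radical terms of $r$ multiply the product into $J^3=0$. I would double-check this, and also that $\pi_\lambda$ suffices as a test map, before concluding.
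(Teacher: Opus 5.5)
Your computation is correct and complete. The paper gives no argument of its own for this lemma; it imports all three items from \cite[Section 4.1]{Erdmann2017}. Your proposal is therefore a genuinely different, self-contained route. You verify the syzygy formula by computing $\Ker(\pi_\lambda)$ explicitly. You then obtain the Ext statements from $\Ext^m_{A_q}(M,N)\cong\underline{\Hom}_{A_q}(\Omega^m M,N)$, together with an explicit description of $\Hom_{A_q}(C(\mu),C(\lambda))$ and of which maps factor through $\pi_\lambda$.

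The two points you wanted to double-check both hold:
\begin{enumerate}
\item Testing against $\pi_\lambda$ suffices, because any map from a projective module to $C(\lambda)$ lifts along the surjection $\pi_\lambda$.
\item The radical part of $r$ contributes nothing, because $(x+\mu y)J(x+\lambda y)\subseteq J^3=0$.
\end{enumerate}

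What your route buys over the citation is transparency about where the hypothesis on $q$ is used. It enters in exactly two places:
\begin{enumerate}
\item As $q^m\neq 1$ for $m\ge 1$, which gives $\mu\neq\lambda$, so every map $C(\mu)\to C(\lambda)$ lands in the socle.
\item As $q^{m-1}\neq 1$ for $m\ge 2$, which makes every socle map factor through $\pi_\lambda$.
\end{enumerate}
At $m=1$, the identity $(x+q^{-1}\lambda y)(x+\lambda y)=0$ shows that the restriction map $\Hom_{A_q}(A_q,C(\lambda))\to\Hom_{A_q}(C(q^{-1}\lambda),C(\lambda))$ is zero. This gives in fact $\Ext^1_{A_q}(C(\lambda),C(\lambda))\cong K$, which is sharper than mere non-vanishing. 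Your Step 2 also produces exactly the exact sequence $0\to C(q^{-1}\lambda)\to A_q\to C(\lambda)\to 0$ that the paper uses later in part (3) of its Example.
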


	\begin{lemma}\label{l15} \cite[Theorem 1.2, Corollary 2.4]{chen2025}
		Let $\mathscr{A}$ be an abelian category with enough projectives.
		\begin{enumerate}
			\item Let $M\in\mathscr{A}$ with $\qpd_{\mathscr{A}}(M)<\infty$ and $\Ext_{\mathscr{A}}^{\geq 2}(M,M)=0$. Then $\pd_{\mathscr{A}}(M)<\infty$.
			\item Let $0\rightarrow N\rightarrow E\rightarrow M\rightarrow 0$ be an exact sequence in $\mathscr{A}$, and suppose $E$ is projective-injective in $\mathscr{A}$. Then $\qpd_{\mathscr{A}}(M)\leq \qpd_{\mathscr{A}}(N)+1$.
			\item Moreover, if $\mathscr{A}$ is a Frobenius abelian category, then $\qpd_{\mathscr{A}}(M)=0$ or $\infty$ for any $M\in\mathscr{A}$.
		\end{enumerate}
	\end{lemma}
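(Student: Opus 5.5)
Since Lemma~\ref{l15} is quoted from \cite{chen2025}, I will only sketch how I would reprove its three parts directly from Definition~\ref{def:qpd}. I treat them in the order (2), (3), (1), because (2) and (3) are direct constructions on a single quasi-projective resolution, while (1) is genuinely homological.

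\emph{Part (2).} Assume $M\neq 0$ and $\qpd(N)=n<\infty$, since otherwise there is nothing to prove. Take a bounded quasi-projective resolution $P_\bullet$ of $N$ with $\H_i(P_\bullet)\cong N^{\oplus a_i}$, $s=\sup P_\bullet$, $h=\hsup P_\bullet$ and $s-h=n$.
\begin{itemize}
\item Because $E$ is injective, $\Hom_{D}(X,E[i])\cong\Hom(\H_i(X),E)$ for every complex $X$.
\item Hence the monomorphisms $N^{\oplus a_i}\hookrightarrow E^{\oplus a_i}$ assemble into a morphism $f:P_\bullet\to T:=\bigoplus_i E^{\oplus a_i}[i]$. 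Here $T$ is a bounded complex of projectives with zero differential.
\item Since $P_\bullet$ is a bounded complex of projectives, $f$ is represented by a chain map.
\item $\H_i(f)$ is injective for every $i$. So the long exact homology sequence gives $\H_i(\Cone f)\cong M^{\oplus a_i}$, and these are not all zero.
\end{itemize}
Moreover $\sup\Cone f\le\max(s+1,h)=s+1$ and $\hsup\Cone f=h$. Therefore $\qpd(M)\le s+1-h=n+1$.

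\emph{Part (3).} Let $P_\bullet$ be a quasi-projective resolution of $M$ with $s=\sup P_\bullet>h=\hsup P_\bullet$. Then $\H_s(P_\bullet)=0$, so $d_s:P_s\to P_{s-1}$ is a monomorphism. In a Frobenius category $P_s$ is injective, so $d_s$ splits and $P_{s-1}\cong P_s\oplus P'_{s-1}$ with $P'_{s-1}$ projective. Deleting $P_s$ and replacing $P_{s-1}$ by $P'_{s-1}$ yields a homotopy-equivalent complex, so the homology is unchanged, and $\sup$ drops by one. Iterating until $\sup=\hsup$ gives $\qpd(M)=0$ whenever it is finite.

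\emph{Part (1).} This is the step I expect to be the main obstacle. Let $P_\bullet$ be a bounded quasi-projective resolution of $M$, so $P_\bullet$ is perfect. The aim is to show that $M$ lies in the thick subcategory generated by the projectives, equivalently $\Ext^{\gg0}(M,-)=0$.

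The natural tool is the truncation triangle
\[
\tau_{\le h-1}P_\bullet\to P_\bullet\to M^{\oplus a_h}[h]\to\tau_{\le h-1}P_\bullet[1],
\]
where the homologies of the left term are copies of $M[j]$ with $j\le h-1$. The connecting map decomposes into components in $\Ext^{j+1-h}(M,M)$, and these vanish except for the adjacent term $j=h-1$. The hypothesis $\Ext^{\ge2}(M,M)=0$ should be used to control how the copies of $M[i]$ are glued together.

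My first attempt is to filter $P_\bullet$ by its homology and apply $\Hom_D(-,M[m])$ for $m\gg0$. Since $P_\bullet$ is perfect, $\Hom_D(P_\bullet,X[m])=0$ for $m\gg0$ and every module $X$. The vanishing of $\Ext^{\ge2}(M,M)$ should then make the resulting spectral sequence (or iterated long exact sequences) degenerate. From this I would try to show that a copy of $M^{\oplus a}$, or of $M^{\oplus a}[i]$, is a direct summand of an object built from $P_\bullet$ by cones and shifts, for instance $\Cone(\tau_{\le h-2}P_\bullet\to P_\bullet)$ after killing the adjacent $\Ext^1$-gluing. That would place $M$ in $\mathrm{thick}(R)$ and give $\pd(M)<\infty$.

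If this direct route fails, my fallback is the argument of \cite[Theorem~1.2]{chen2025}: compare the perfect complexes $\mathbb{R}\Hom(P_\bullet,P_\bullet)$ and $\mathbb{R}\Hom(M,M)$, whose cohomology is concentrated in degrees $0,1$ by hypothesis, and deduce boundedness of $\Ext^*(M,-)$.
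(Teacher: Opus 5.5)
The paper gives no proof of this lemma; it only quotes it from the cited source. Your arguments for (2) and (3) are correct and self-contained. In (2), injectivity of $E$ gives $\Hom_{D(\mathscr{A})}(P_\bullet,E[i])\cong\Hom(\H_i(P_\bullet),E)$, so the monomorphisms $N^{\oplus a_i}\hookrightarrow E^{\oplus a_i}$ assemble into $f$, and the cone has the stated homology and bounds. In (3), $d_s$ is a split monomorphism because $P_s$ is injective, and the split-off piece $P_s\to d_s(P_s)$ is contractible. One small caveat: dismissing $M=0$ as ``nothing to prove'' depends on the convention for $\qpd(0)$. With $\qpd(0)=\infty$, as in Definition~\ref{def:qpd}, the sequence $0\to E\to E\to 0\to 0$ violates (2), so the statement must be read for nonzero modules.

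The genuine gap is part (1). You do not give an argument, only a plan phrased with ``should'' and a fallback to the citation. The hypothesis $\Ext^{\geq 2}(M,M)=0$ looked hard to use because your truncation triangle points the wrong way. With homological indexing and $h=\hsup P_\bullet$, the top homology is a \emph{sub}complex, $\tau_{\geq h}P_\bullet\simeq M^{\oplus a_h}[h]$. There is no natural map $P_\bullet\to M^{\oplus a_h}[h]$ with cocone $\tau_{\le h-1}P_\bullet$; the correct triangle is
\[
M^{\oplus a_h}[h]\to P_\bullet\to \tau_{\leq h-1}P_\bullet\xrightarrow{\ \delta\ } M^{\oplus a_h}[h+1].
\]
With your orientation the components land in $\Ext^{\leq 0}(M,M)$, where the hypothesis says nothing. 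With the correct one it applies directly:
\begin{enumerate}
\item $\tau_{\le h-1}P_\bullet$ is built by successive truncation triangles from the $M^{\oplus a_j}[j]$ with $j\le h-1$.
\item For each such $j$, $\Hom_{D(\mathscr{A})}(M^{\oplus a_j}[j],M^{\oplus a_h}[h+1])\cong\Ext^{h+1-j}(M,M)^{\oplus a_ja_h}=0$, since $h+1-j\geq 2$.
\item The long exact $\Hom$-sequences then give $\Hom_{D(\mathscr{A})}(\tau_{\le h-1}P_\bullet,M^{\oplus a_h}[h+1])=0$, so $\delta=0$.
\end{enumerate}
Gluing between adjacent homology degrees is an $\Ext^2$-class, not the ``$\Ext^1$-gluing'' you mention; that is exactly why the hypothesis starts at $2$.

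Hence the triangle splits, and $M^{\oplus a_h}[h]$ (with $a_h\geq 1$) is a direct summand of $P_\bullet$ in $D(\mathscr{A})$. For any object $X$ and any $m>\sup P_\bullet-h$, the group $\Ext^m(M,X)^{\oplus a_h}$ is then a direct summand of
\[
\Hom_{D(\mathscr{A})}(P_\bullet,X[m+h])=\Hom_{K(\mathscr{A})}(P_\bullet,X[m+h])=0,
\]
because $P_{m+h}=0$. So $\pd M\leq \sup P_\bullet-\hsup P_\bullet$ for every such resolution, i.e.\ $\pd M=\qpd M<\infty$. No spectral sequence and no comparison of derived endomorphism complexes is needed.
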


	\begin{lemma}\label{l16}\cite[Proposition 3.3, Proposition 3.6]{gheibi2020}
		Let $\mathscr{A}$ be an abelian category with enough projectives. Let $M,N\in\mathscr{A}$. Then:
		\begin{enumerate}
			\item $\qpd_{\mathscr{A}}(M\oplus N)\leq \sup\{\qpd_{\mathscr{A}}(M),\qpd_{\mathscr{A}}(N)\}$.
			\item $\qpd_{\mathscr{A}}(\Omega(M))\leq \qpd_{\mathscr{A}}(M)$.
			\item If there is a positive integer $n$ such that $\Omega^n(M)\cong M$, then $\qpd_{\mathscr{A}}(M)=0$.
		\end{enumerate}
	\end{lemma}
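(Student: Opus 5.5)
For part (1) I would symmetrize the multiplicities by taking shifted copies. For part (2) I would take a cone against a projective cover of the homology. For part (3) I would splice a periodic resolution into a single finite complex.

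\emph{Part (1).} Assume both dimensions are finite, and take bounded quasi-projective resolutions $P_\bullet$ of $M$ and $Q_\bullet$ of $N$ with $\H_i(P_\bullet)\cong M^{\oplus a_i}$ and $\H_i(Q_\bullet)\cong N^{\oplus b_i}$. After shifting I may assume $\hsup P_\bullet=\hsup Q_\bullet=0$, so $a_0\neq 0\neq b_0$ and $a_i=b_i=0$ for $i>0$. The naive sum $P_\bullet\oplus Q_\bullet$ fails because its homology $M^{\oplus a_i}\oplus N^{\oplus b_i}$ is not of the form $(M\oplus N)^{\oplus c_i}$. So I would instead form
\[
P'_\bullet=\bigoplus_{j\le 0}P_\bullet[j]^{\oplus b_j},\qquad Q'_\bullet=\bigoplus_{i\le 0}Q_\bullet[i]^{\oplus a_i}.
\]
Both are finite sums, hence bounded complexes of projectives. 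One computes $\H_n(P'_\bullet)\cong M^{\oplus c_n}$ and $\H_n(Q'_\bullet)\cong N^{\oplus c_n}$ with the same convolution $c_n=\sum_{i+j=n}a_ib_j$. Therefore $P'_\bullet\oplus Q'_\bullet$ is a quasi-projective resolution of $M\oplus N$. Its top homology is in degree $0$, since $c_0\ge a_0b_0\neq0$ and $c_n=0$ for $n>0$. Since only shifts $j\le 0$ occur, $\sup(P'_\bullet\oplus Q'_\bullet)\le\max\{\sup P_\bullet,\sup Q_\bullet\}$. Taking infima gives the bound. The only care needed is the sign convention for shifts, which I would fix so that $\H_n(X[j])=\H_{n-j}(X)$.

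\emph{Part (2).} Let $0\to\Omega M\to F\to M\to 0$ with $F$ projective, and let $P_\bullet$ be a bounded quasi-projective resolution of $M$ with $\H_i(P_\bullet)\cong M^{\oplus a_i}$.
\begin{itemize}
\item For each $i$, lift $F^{\oplus a_i}\to M^{\oplus a_i}\cong \H_i(P_\bullet)$ through the cycles $Z_i(P_\bullet)$; this is possible because $F$ is projective. This gives a chain map $\phi:\bigoplus_i F^{\oplus a_i}[i]\to P_\bullet$ that is surjective on every homology group.
\item Let $C_\bullet=\Cone(\phi)$. The long exact homology sequence splits into short exact pieces, giving $\H_{i+1}(C_\bullet)\cong\Ker\bigl(F^{\oplus a_i}\to M^{\oplus a_i}\bigr)\cong(\Omega M)^{\oplus a_i}$.
\item Hence $\hsup C_\bullet=\hsup P_\bullet+1$ and $\sup C_\bullet\le\max\{\sup P_\bullet,\hsup P_\bullet+1\}$.
\end{itemize}
Thus $\sup C_\bullet-\hsup C_\bullet\le\max\{\sup P_\bullet-\hsup P_\bullet-1,0\}\le \sup P_\bullet-\hsup P_\bullet$, the last step because this difference is always $\ge 0$. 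The degenerate case $\Omega M=0$, where $M$ is projective, has to be set aside by the paper's conventions.

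\emph{Part (3).} I would use the periodicity directly. Replacing $n$ by $2n$ if necessary, assume $n\ge 2$. Splicing the first $n$ steps of a projective resolution with $\Omega^nM\cong M$ gives an exact sequence $0\to M\to F_{n-1}\to\cdots\to F_0\to M\to 0$. The complex $F_{n-1}\to\cdots\to F_0$, concentrated in degrees $n-1,\dots,0$, then has $\H_0\cong M$ and $\H_{n-1}\cong\Omega^nM\cong M$. It is exact elsewhere, since $n\ge2$ keeps the two ends distinct. So $\sup=\hsup=n-1$ and $\qpd(M)\le 0$, which forces equality.

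I expect the main obstacle to be the bookkeeping in (1): choosing the shifts so that the homology multiplicities match and checking that $\sup$ does not grow. The lifting step in (2) is standard.
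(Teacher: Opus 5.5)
The paper gives no proof of this lemma; it is quoted from \cite{gheibi2020}, so there is no internal argument to compare against. Your proposal is a correct, self-contained proof that works in the abelian setting, and I see no gap.
\begin{enumerate}
\item In (1), normalizing to $\hsup P_\bullet=\hsup Q_\bullet=0$ and using only non-positive shifts does two things at once. It keeps $\sup$ from growing, and it produces the common multiplicities $c_n=\sum_{i+j=n}a_ib_j$ with $c_0\ge a_0b_0>0$ and $c_n=0$ for $n>0$.
\item In (2), the cone of $\phi:\bigoplus_i F^{\oplus a_i}[i]\to P_\bullet$ does have $\H_{i+1}\cong(\Omega M)^{\oplus a_i}$. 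Your computation actually yields the sharper bound $\qpd_{\mathscr{A}}(\Omega M)\le\max\{\qpd_{\mathscr{A}}(M)-1,0\}$.
\item In (3), passing to $n\ge 2$ by splicing the periodic sequence with itself is exactly what is needed. It places $\H_0$ and $\H_{n-1}$ in different degrees.
\end{enumerate}

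Your caveat in (2) is justified. With the convention $\qpd_R(0)=\infty$ stated after Definition \ref{def:qpd}, part (2) is literally false for a nonzero projective $M$ whose syzygy is taken to be $0$. With the convention $\qpd_R(0)=-\infty$, which the paper tacitly uses in the proof of Proposition \ref{p10}, the case is harmless.

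Likewise, (3) must be read with $M\neq 0$, since $\qpd(0)\neq 0$ under either convention. In (1), the cases $M=0$ or $N=0$ are trivial, so your normalization loses nothing.
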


	Let $\mathscr{A}$ be an abelian category with enough projectives. Define $\mathscr{Q}:=\{M\in\mathscr{A}\mid \qpd_{\mathscr{A}}(M)<\infty\}$. The following example shows that $\mathscr{Q}$ does not satisfy any 2-out-of-3 property.
	
	\begin{example}
		Let $A_q=K\langle x,y\rangle/(x^2,y^2,yx+qxy)$, and let $C(\lambda):=A_q(x+\lambda y)=\langle xy,x+\lambda y\rangle_K$ as above. Then:
		\begin{enumerate}
			\item In the natural exact sequence $0\rightarrow K(xy)\rightarrow C(\lambda)\rightarrow C(\lambda)/K(xy)\rightarrow 0$, we have $\qpd_{A_q}(K(xy))=\qpd_{A_q}(C(\lambda)/K(xy))=0$, but $\qpd_{A_q}(C(\lambda))=\infty$.
			\item In the natural exact sequence $0\rightarrow C(\lambda)\rightarrow J\rightarrow J/C(\lambda)\rightarrow 0$, we have $\qpd_{A_q}(J)=\qpd_{A_q}(J/C(\lambda))=0$, but $\qpd_{A_q}(C(\lambda))=\infty$.
			\item In the natural exact sequence $0\rightarrow C(q^{-1}\lambda)/K(xy)\rightarrow A_q/K(xy)\rightarrow C(\lambda)\rightarrow 0$, we have $\qpd_{A_q}(C(q^{-1}\lambda)/K(xy))=\qpd_{A_q}(A_q/K(xy))=0$, but $\qpd_{A_q}(C(\lambda))=\infty$.
		\end{enumerate}
	\end{example}
	\begin{proof}
		For (1), since $J(K(xy))=0$ and $\dim_K(K(xy))=1$, we have $K(xy)\cong S$. Similarly, $C(\lambda)/K(xy)\cong S$. Thus $\qpd_{A_q}(K(xy))=\qpd_{A_q}(C(\lambda)/K(xy))=0$ by Lemma \ref{l13}. Since $A_q$ is self-injective, by Lemma \ref{l15}(3), $\qpd_{A_q}(C(\lambda))$ is either $0$ or $\infty$. If it were $0$, then by Lemma \ref{l15}(1) and Lemma \ref{l14}(3), $C(\lambda)$ would have finite projective dimension, hence be projective by self-injectivity, contradicting Lemma \ref{l14}(1). Thus $\qpd_{A_q}(C(\lambda))=\infty$.
		
		For (2), note that $0\rightarrow J\rightarrow A_q\rightarrow S\rightarrow 0$ is exact. By Lemma \ref{l16}(2), $\qpd_{A_q}(J)\leq \qpd_{A_q}(S)=0$, so $\qpd_{A_q}(J)=0$. Also $J^2=\langle xy\rangle_K\subseteq C(\lambda)$, so $J(J/C(\lambda))=0$ and $\dim_K(J/C(\lambda))=1$, hence $J/C(\lambda)\cong S$. Therefore $\qpd_{A_q}(J/C(\lambda))=0$.
		
		For (3), consider the exact sequence $0\rightarrow C(q^{-1}\lambda)\xrightarrow{i} A_q\xrightarrow{\pi} C(\lambda)\rightarrow 0$, with $K(xy)\subseteq C(q^{-1}\lambda)\subseteq A_q$. By the snake lemma applied to the diagram
		\[
		\begin{array}{ccccccccc}
			0&\rightarrow& K(xy)&\rightarrow& C(q^{-1}\lambda)&\rightarrow& C(q^{-1}\lambda)/K(xy)&\rightarrow&0\\
			&&\downarrow&&\downarrow&&\downarrow&&\\
			0&\rightarrow& K(xy)&\rightarrow& A_q&\rightarrow& A_q/K(xy)&\rightarrow&0
		\end{array}
		\]
		we obtain the exact sequence $0\rightarrow C(q^{-1}\lambda)/K(xy)\rightarrow A_q/K(xy)\rightarrow C(\lambda)\rightarrow 0$. Since $J(C(q^{-1}\lambda)/K(xy))=0$ and $\dim_K(C(q^{-1}\lambda)/K(xy))=1$, we have $C(q^{-1}\lambda)/K(xy)\cong S$. Also $K(xy)\cong S$, so by Lemma \ref{l15}(2), $\qpd_{A_q}(A_q/K(xy))\leq 1$. By Lemma \ref{l15}(3), it follows that $\qpd_{A_q}(A_q/K(xy))=0$.
	\end{proof}
	
	This example shows that the class of modules of finite quasi-projective dimension behaves quite differently from the class of modules of finite projective dimension, which does satisfy the 2-out-of-3 property.
	
	\section{Quasi-global dimension of triangular matrix rings}
	
	Let $B$ and $C$ be Artin algebras, and let $U$ be a finitely generated $C$-$B$-bimodule. Define the triangular matrix ring
	\[
	A:=\begin{pmatrix}
		B & 0 \\
		_C U_B & C
	\end{pmatrix}.
	\]
	Set
	\[
	e_B:=\begin{pmatrix}
		1_B & 0 \\
		0 & 0
	\end{pmatrix}, \qquad
	e_C:=\begin{pmatrix}
		0 & 0 \\
		0 & 1_C
	\end{pmatrix}.
	\]
	
	\begin{remark}
		\begin{enumerate}
			\item Any left $A$-module corresponds to a triple $(X,Y,\phi)$, where $X$ is a $B$-module, $Y$ is a $C$-module, and $\phi:U\otimes_B X\rightarrow Y$ is a $C$-homomorphism.
			\item As abelian groups, $(X,Y,\phi)\cong X\oplus Y$, and the action is given by
			\[
			\begin{pmatrix}
				b & 0 \\
				u & c
			\end{pmatrix}(x,y)=(bx,\phi(u\otimes x)+cy).
			\]
			\item There are four natural functors:
			\begin{itemize}
				\item $j_!:B\text{-}\mod\rightarrow A\text{-}\mod$ given by $X\mapsto (X,U\otimes_B X,\Id)$.
				\item $i_\star:C\text{-}\mod\rightarrow A\text{-}\mod$ given by $Y\mapsto (0,Y,0)$.
				\item $F_B:A\text{-}\mod\rightarrow B\text{-}\mod$ given by $(X,Y,\phi)\mapsto X$.
				\item $F_C:A\text{-}\mod\rightarrow C\text{-}\mod$ given by $(X,Y,\phi)\mapsto Y$.
			\end{itemize}
			\item The functors $F_B$ and $F_C$ are exact. Moreover, $F_B$ sends projective $A$-modules to projective $B$-modules. If $\pd_C U\leq d$, then $\pd_C(F_C(P))\leq d$ for each projective $A$-module $P$. The functor $i_\star$ is exact and sends projectives to projectives. The functor $j_!$ sends projectives to projectives, and if $U\in B^{op}\text{-}\Proj$, then $j_!$ is exact.
		\end{enumerate}
	\end{remark}
	
	We now recall the notion of stratifying ideals and their relation to homological ring epimorphisms.
	
	\begin{definition}\cite{AKSLY17}
		Let $R$ be a ring and $e$ an idempotent in $R$. The ideal $ReR$ is called a stratifying ideal if the multiplication map
		\[
		Re\otimes_{eRe} eR\rightarrow ReR
		\]
		is an isomorphism as $R$-$R$-bimodules, and $\Tor_n^{eRe}(Re,eR)=0$ for all $n\geq 1$.
	\end{definition}
	
	\begin{remark}
		If $ReR$ is a stratifying ideal, then the natural surjection $R\twoheadrightarrow R/ReR$ is a homological ring epimorphism.
	\end{remark}
	The following result is easy to verify and so left as an excise.
	\begin{lemma}
		Let $A=\begin{pmatrix} B & 0 \\ U & C \end{pmatrix}$ and let $e=e_C=\begin{pmatrix} 0 & 0 \\ 0 & 1_C \end{pmatrix}$. Then:
		\begin{enumerate}
			\item $AeA$ is a stratifying ideal of $A$.
			\item $eAe\cong C$ and $A/AeA\cong B$ as algebras.
		\end{enumerate}
	\end{lemma}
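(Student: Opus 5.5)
The plan is to compute every piece explicitly, using the fact that the upper right corner of $A$ is zero. First I will identify the relevant one-sided pieces of $A$. Right multiplication by $e=e_C$ picks out the second column, and left multiplication picks out the second row. Since the $(1,2)$-entry vanishes, this gives
\[
Ae=\left\{\begin{pmatrix}0&0\\0&c\end{pmatrix}\right\},\qquad
eA=\left\{\begin{pmatrix}0&0\\u&c\end{pmatrix}\right\},\qquad
eAe=\left\{\begin{pmatrix}0&0\\0&c\end{pmatrix}\right\}.
\]
The map $c\mapsto \begin{pmatrix}0&0\\0&c\end{pmatrix}$ is a ring isomorphism $C\cong eAe$ with identity $e$, which settles the first half of (2).

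Next I will compute $AeA$. It is spanned by the products
\[
\begin{pmatrix}0&0\\0&c\end{pmatrix}\begin{pmatrix}b&0\\u&c'\end{pmatrix}=\begin{pmatrix}0&0\\cu&cc'\end{pmatrix}.
\]
Hence $AeA=\begin{pmatrix}0&0\\U&C\end{pmatrix}$, and this coincides with $eA$. The projection $\begin{pmatrix}b&0\\u&c\end{pmatrix}\mapsto b$ is multiplicative, because the $(1,1)$-entry of a product of two such matrices is $bb'$. It is also unital and surjective, and its kernel is exactly $AeA$. Therefore $A/AeA\cong B$ as algebras, which completes (2).

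For (1), the key observation is that $(1-e)Ae=0$, so $Ae=eAe$. As a right $eAe$-module, $Ae$ is therefore the regular module $eAe_{eAe}\cong C_C$, which is free of rank one. Consequently $\Tor_n^{eAe}(Ae,eA)=0$ for all $n\geq 1$, since the first argument is projective. Moreover, $Ae\otimes_{eAe}eA=eAe\otimes_{eAe}eA\cong eA$ via $e\otimes x\mapsto x$, and under this identification the multiplication map becomes the identity inclusion $eA\to AeA$. By the computation above, this inclusion is an equality. The multiplication map is automatically $A$-$A$-bilinear, so it is an isomorphism of $A$-$A$-bimodules, and $AeA$ is stratifying.

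No step is a real obstacle. The only point requiring care is bookkeeping: one must check that the identification $Ae=eAe$ is compatible with the left $A$-action on the tensor product, so that the bijection is genuinely a bimodule map. This follows because the multiplication map is bilinear by construction and only its bijectivity needs to be verified.
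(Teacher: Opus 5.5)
Your proof is correct. The paper gives no proof of this lemma (it is left as an exercise), and your explicit matrix computation is the natural verification: since $(1-e)Ae=0$, the module $Ae=eAe$ is free of rank one over $eAe$, so the $\Tor$ groups vanish, $AeA=eA$, and the multiplication map is the canonical isomorphism $eAe\otimes_{eAe}eA\cong eA$.
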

	
	\begin{corollary}\label{c20}
		$\qgld(B)\leq \qgld(A)$. Moreover, if $\pd_C(U)=d<\infty$, then $\qgld(C)\leq \qgld(A)+d$.
	\end{corollary}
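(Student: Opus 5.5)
For the first inequality I will use the preceding lemma: $e=e_C$ makes $AeA$ a stratifying ideal, and $A/AeA\cong B$. By the remark on stratifying ideals, the canonical surjection $\lambda:A\twoheadrightarrow A/AeA\cong B$ is then a homological ring epimorphism. Moreover $B\cong A/AeA$ is a cyclic, hence finitely generated, left $A$-module. Corollary \ref{c8} therefore gives $\qgld(B)\leq \qgld(A)$ directly.

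It is worth checking that this surjection matches the description of $A$-modules as triples. $AeA$ consists of the matrices with entries in $U$ and $C$, so $A/AeA\cong B$. A $B$-module $X$ viewed as an $A$-module is the triple $(X,0,0)$. So $\lambda_\ast$ is the natural embedding, and Proposition \ref{p7} applies to it.

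For the second inequality I will use the exact functor $F_C:A\text{-}\mod\to C\text{-}\mod$, $(X,Y,\phi)\mapsto Y$, together with Proposition \ref{p10}. The roles of the rings there are swapped: the source category is $A$-modules and the target is $C$-modules. I first check the hypothesis that $F_C(P)$ has projective dimension at most $d$ for every projective $A$-module $P$.

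The indecomposable projective summands of $A$ are $Ae_B=(B,U,\Id)$ and $Ae_C=(0,C,0)$. Applying $F_C$ gives $U$ and $C$, which have projective dimension at most $d$ over $C$. An arbitrary projective $A$-module is a summand of a direct sum of copies of $A$. Since $F_C$ is additive and commutes with direct sums, and projective dimension is bounded on direct sums and summands, the bound $\pd_C F_C(P)\le d$ follows; this is recorded in the remark as well. Proposition \ref{p10} then yields
\[
\qpd_C(F_C(M))\le \qpd_A(M)+d \quad \text{for every } A\text{-module } M.
\]

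It remains to realize every finitely generated $C$-module $N$ as $F_C(M)$ for some finitely generated $A$-module $M$. I take $M=i_\star(N)=(0,N,0)$, which satisfies $F_C(i_\star N)=N$. This gives
\[
\qpd_C(N)\le \qpd_A(i_\star N)+d\le \qgld(A)+d.
\]
Taking the supremum over $N\in C\text{-}\mod$ gives $\qgld(C)\le\qgld(A)+d$.

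The only step needing care is the hypothesis check for Proposition \ref{p10}, namely that $\pd_C F_C(P)\le d$ for all projective $A$-modules $P$, including infinitely generated ones. It reduces to the calculation on the summands $Ae_B$ and $Ae_C$ and the direct-sum argument above. I also note that the zero module has $\qpd=\infty$ by convention, and the inequality for it is trivial in either reading.
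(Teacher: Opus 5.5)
Your proof is correct and follows the paper's argument. The first inequality comes from applying Proposition \ref{p7} (packaged with finite generation as Corollary \ref{c8}) to the homological epimorphism $A\twoheadrightarrow A/AeA\cong B$, and the second from Proposition \ref{p10} applied to the exact functor $F_C$. You also spell out two details the paper leaves implicit: the check that $\pd_C F_C(P)\le d$ for every projective $A$-module $P$, and the realization of each $N\in C\text{-}\mod$ as $F_C(i_\star N)$.
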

	\begin{proof}
		The first inequality follows from Proposition \ref{p7} applied to the homological epimorphism $A\rightarrow A/AeA\cong B$. The second follows from Proposition \ref{p10} applied to the restriction functor $A\text{-}\Mod\rightarrow C\text{-}\Mod$.
	\end{proof}

Following by  the proof of \cite[Proposition 3.2]{SLM25}, we can obtain the following result.	
	\begin{lemma}\label{l21}
		Let $\mathscr{A}$ be an abelian category with enough projectives, and let $0\rightarrow P\rightarrow M\rightarrow N\rightarrow 0$ be an exact sequence with $P\in\mathscr{A}\text{-}\Proj$. Then
		\[
		\qpd_{\mathscr{A}}(N)\leq \max\{1,\qpd_{\mathscr{A}}(M)\}.
		\]
	\end{lemma}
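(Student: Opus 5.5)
Assume $\qpd_{\mathscr{A}}(M)=n<\infty$; otherwise there is nothing to prove. The idea is to take a quasi-projective resolution of $M$, tensor it with the short exact sequence, and cone off the copies of $P$. Concretely, pick a bounded complex $Q_\bullet$ of projectives with $\H_i(Q_\bullet)\cong M^{\oplus a_i}$, $\sup Q_\bullet=s$, $\hsup Q_\bullet=h$ and $s-h=n$. Let $a=\sum_i a_i$ and
\[
T_\bullet:=\bigoplus_i P^{\oplus a_i}[i],
\]
a bounded complex of projectives with zero differential. For each $i$ the inclusion $P^{\oplus a_i}\to M^{\oplus a_i}\cong \H_i(Q_\bullet)$ lifts to a map into the cycles $Z_i(Q_\bullet)$, because $P$ is projective and $Z_i\to \H_i$ is surjective. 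This gives a chain map $f:T_\bullet\to Q_\bullet$ that induces the given monomorphisms on homology.

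Next consider $C_\bullet:=\Cone(f)$. It is a bounded complex of projectives with $\sup C_\bullet\le \max\{s,\sup T_\bullet+1\}$. Since $\H(f)$ is injective in every degree, the long exact homology sequence shows that $\H_i(C_\bullet)\cong \Coker(\H_i f)\cong N^{\oplus a_i}$. Hence $C_\bullet$ is a quasi-projective resolution of $N$, provided $N\neq 0$. If $N=0$, then $\qpd(N)=\infty$ by the convention, which would contradict the statement; I assume the intended reading excludes this case, or else handle it as a degenerate case. Moreover $\hsup C_\bullet=h$.

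The obstacle is the degree bound. The summand $P^{\oplus a_i}[i]$ contributes the term $P^{\oplus a_i}$ in degree $i+1$ of the cone. For $i=h$ this gives degree $h+1$, so
\[
\sup C_\bullet\le\max\{s,h+1\},\qquad \sup C_\bullet-\hsup C_\bullet\le\max\{n,1\}.
\]
Note that only the degrees $i\le h$ occur, since $a_i=0$ for $i>h$. This yields exactly the claimed inequality $\qpd_{\mathscr{A}}(N)\le\max\{1,\qpd_{\mathscr{A}}(M)\}$.

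The main checks are the lifting step, where one needs $T_\bullet$ to map into the cycles so that $f$ is a chain map, and the identification of the homology of the cone using injectivity of $\H(f)$. Both are routine. The bound $h+1$ is the only place where the constant $1$ enters.
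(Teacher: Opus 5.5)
Your proposal is correct. You lift $P^{\oplus a_i}\to M^{\oplus a_i}\cong \H_i(Q_\bullet)$ into the cycles, cone off $T_\bullet=\bigoplus_i P^{\oplus a_i}[i]$, and read off $\H_i(\Cone f)\cong N^{\oplus a_i}$ from injectivity of $\H(f)$, with $\sup\Cone f\le\max\{s,h+1\}$. The paper gives no argument of its own here, only a pointer to the proof of Proposition 3.2 in [SLM25], so I cannot check it against that source; the mapping cone is the natural construction, and it shows the constant $1$ comes exactly from the shifted degree-$h$ copy of $P$.

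Your caveat about $N=0$ is also justified. Under the paper's stated convention $\qpd(0)=\infty$, the statement fails for $M=P\neq 0$, $N=0$. Under the convention $\qpd(0)=-\infty$, which the paper itself uses in the proof of Proposition~\ref{p10}, that case is trivially true.
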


	\begin{proposition}\label{p22}
		Let $C$ be a semisimple algebra and assume $U\in B^{op}\text{-}\Proj$. Then
		\[
		\qgld(B)\leq \qgld(A)\leq \max\{1,\qgld(B)\}.
		\]
		Consequently, if $\qgld(B)\geq 1$, then $\qgld(A)=\qgld(B)$.
	\end{proposition}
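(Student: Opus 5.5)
The left inequality $\qgld(B)\leq\qgld(A)$ is the first part of Corollary \ref{c20}: $A\to A/AeA\cong B$ is a homological ring epimorphism with $B$ finitely generated over $A$, so Proposition \ref{p7} applies. The work is in the right inequality. Fix a finitely generated $A$-module $M=(X,Y,\phi)$. We may assume $\qgld(B)=n<\infty$, since otherwise there is nothing to prove.

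The plan is to realise $M$, up to projective summands, as a cokernel of a short exact sequence with a projective submodule, and then apply Lemma \ref{l21}. Since $U_B$ is projective, $j_!$ is exact and sends projectives to projectives. Since $C$ is semisimple, every $C$-module is projective, and $i_\star$ sends projectives to projectives, so every $A$-module of the form $(0,Y',0)$ is projective over $A$. Consider the canonical exact sequence of $A$-modules
\[
0\rightarrow (0,\Ker\phi,0)\rightarrow j_!(X)=(X,U\otimes_BX,\Id)\rightarrow (X,\Im\phi,\phi)\rightarrow 0,
\]
together with the sequence $0\to (X,\Im\phi,\phi)\to M\to (0,Y/\Im\phi,0)\to 0$.

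Over semisimple $C$ the inclusion $\Im\phi\subseteq Y$ splits, so $M\cong (X,\Im\phi,\phi)\oplus(0,Y',0)$ with $(0,Y',0)$ projective. Since $\qpd$ of a projective is $0$, Lemma \ref{l16}(1) reduces the problem to the case $\phi$ surjective, that is, to $N=(X,\Im\phi,\phi)$. (If $N=0$, then $M$ is projective and $\qpd_A(M)=0$.)

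For $N$ we use the first sequence: its kernel $(0,\Ker\phi,0)$ is projective. Lemma \ref{l21} then gives
\[
\qpd_A(N)\leq\max\{1,\qpd_A(j_!X)\}.
\]
It remains to show $\qpd_A(j_!X)\leq\qpd_B(X)\leq n$. Because $j_!$ is exact and preserves projectives, this is Proposition \ref{p10} with $d=0$: the proof there only needs exactness and $\pd_A(j_!Q)\leq 0$. Concretely, applying $j_!$ termwise to a quasi-projective resolution of $X$ gives a bounded complex of projective $A$-modules with homology $j_!(X)^{\oplus a_i}$ and the same $\sup$ and $\hsup$. Combining, $\qpd_A(M)\leq\max\{1,n\}$ for every finitely generated $M$, so $\qgld(A)\leq\max\{1,\qgld(B)\}$. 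The final equality then follows from the two inequalities once $\qgld(B)\geq1$.

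The main obstacle I anticipate is bookkeeping with degenerate cases in Lemma \ref{l21} and Lemma \ref{l16}(1). The convention $\qpd(0)=\infty$ could bite when $\Ker\phi=0$ (then the sequence is trivial and $N\cong j_!X$) or when $X=0$ (then $M=(0,Y,0)$ is projective). These should be handled separately before invoking the lemmas. I would also check that Lemma \ref{l16}(1) is applied only with both summands nonzero.
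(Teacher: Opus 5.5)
Your proposal is correct and follows the paper's proof essentially step for step. Like the paper, you split off the projective summand $(0,W,0)$ using semisimplicity of $C$ and Lemma \ref{l16}(1), apply Lemma \ref{l21} to $0\to i_\star(\Ker\phi)\to j_!(X)\to M'\to 0$, and bound $\qpd_A(j_!X)\le\qpd_B(X)$ via Proposition \ref{p10} with $d=0$. Your explicit treatment of the degenerate cases ($M'=0$, $\Ker\phi=0$) is slightly more careful than the paper, which only says ``we may assume $M'\neq 0$''.
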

	\begin{proof}
		The left inequality follows from Corollary \ref{c20}. For the right inequality, take $M=(X,Y,\phi)\in A\text{-}\mod$. Since $C$ is semisimple, we may write $Y=\Im(\phi)\oplus W$ for some $W$. Then $M\cong (X,\Im(\phi),\phi)\oplus (0,W,0)$. Let $M'=(X,\Im(\phi),\phi)$.  We may assume $M'\not=0$.  Since $C$ is semisimple, $W$ is projective as a $C$-module, so $(0,W,0)$ is projective as an $A$-module. By Lemma \ref{l16}(1), $\qpd_A(M)\leq \qpd_A(M')$.
		
		Now consider the exact sequence
		\[
		0\rightarrow i_\star(\Ker(\phi))\rightarrow j_!(X)\rightarrow M'\rightarrow 0.
		\]
		Since $\Ker(\phi)$ is projective as a $C$-module (as a submodule of a projective module over a semisimple algebra), $i_\star(\Ker(\phi))$ is projective as an $A$-module. By Lemma \ref{l21}, $\qpd_A(M')\leq \max\{1,\qpd_A(j_!(X))\}$. Since $U\in B^{op}\text{-}\Proj$, the functor $j_!$ is exact, so by Proposition \ref{p10}, $\qpd_A(j_!(X))\leq \qpd_B(X)$. Therefore $\qpd_A(M)\leq \max\{1,\qpd_B(X)\}\leq \max\{1,\qgld(B)\}$. Taking the supremum over all $M$ gives the result.
	\end{proof}

	The following example shows that the condition ``$C$ is semisimple'' cannot be omitted in Proposition \ref{p22}.
	
	\begin{example}
		Let $C=K[x]/\langle x^2\rangle$ where $K$ is a field. Set $B=K$ and $U=C/\langle x\rangle$. Then $\qgld(B)=0$ and $\qgld(A)=\infty$.
	\end{example}
	\begin{proof}
		Trivially, $\qgld(B)=\qgld(K)=0$. Every indecomposable projective  $A$-module is of the form $P_1=(K,U\otimes_K K,\Id)\cong (K,U,0)$ or $P_2=(0,C,0)$. Let $M=(K,0,0)\in A\text{-}\mod$. We show $\qpd_A(M)=\infty$.
		
		First, note that $\pd_A(M)=\infty$. Indeed, consider the exact sequence
		\[
		0\rightarrow (0,U,0)\rightarrow (K,U,0)\xrightarrow{(\Id_K,0)} M\rightarrow 0.
		\]
		Setting $N:=(0,U,0)$, there is an exact sequence $0\rightarrow N\rightarrow P_2\rightarrow N\rightarrow 0$, so $N$ has infinite projective dimension, hence so does $M$.
		
		Now we claim $\Ext_A^n(M,M)=0$ for all $n\geq 1$. From the exact sequence $0\rightarrow N\rightarrow P_1\rightarrow M\rightarrow 0$, we get $\Ext_A^1(M,M)=0$ since $\Hom_A(N,M)=0$. Then $\Ext_A^n(M,M)=0$ for $n\geq 2$ by dimension shifting. By Lemma \ref{l15}(1), $\qpd_A(M)=\infty$.
	\end{proof}
	
	The following example shows that the condition ``$U\in B^{op}\text{-}\Proj$'' cannot be omitted in Proposition \ref{p22}.
	
	\begin{example}
		Let $B=K[x]/\langle x^2\rangle$ where $K$ is a field. Set $C=K$ and $U=B/\langle x\rangle$. Then $\qgld(B)=0$ and $\qgld(A)=\infty$.
	\end{example}
	\begin{proof}
		Since the indecomposable $R$-modules are only of the form $B$ and $U$ up to isomorphism. Since $\Omega_B(U)\cong U$, it follows by Lemma \ref{l16}(3) that $\qgld(B)=0.$
		
		Every indecomposable $A$-module is of the forms $P_1=(B,U\otimes_BB,\Id_{U\otimes_BB})\cong (B,U,\theta)$ where $\theta:U\otimes_BB\rightarrow U$ is the natural isomorphism or $P_2=(0,K,0).$
		Set $M=(U,U,\mu)$ where $\mu:U\otimes_BU\rightarrow U$ is the multiplication map. We will show $\qpd_AM=\infty$. First we will show $\pd_AM=\infty.$ Take $p:=(\pi,\Id_U):(B,U,\theta)\rightarrow M$ where $\pi$ is the natural epimorphism. Then $p$ is an epimorphism of $A$-modules. Consider the exact sequence $$0\rightarrow ((x)/(x^2),0,0)\rightarrow (B,U,\theta)\xrightarrow{p}M\rightarrow0.$$ Note that $N:=(U,0,0)\cong ((x)/(x^2),0,0)$ as $A$-modules. There exists  a short exact sequence $$0\rightarrow ((x)/(x^2),0,0)\rightarrow (B,U,\theta)\xrightarrow{q:=(\pi,0)}N\rightarrow0$$  $A$-modules. So $\Ker(q)\cong (U,U,0)\cong (U,0,0)\oplus(0,U,0)=N\oplus(0,U,0)$. Since $\pd_AN=\infty$, $\pd_AM=\infty$.
		
		Next, we will show $\Ext_A^n(M,M)=0$ for each $n\geq 1.$ We claim that $\Hom_A(N,M)=0$. For any $h\in \Hom_A(N,M)$
		Then $h=(f,0)$ where $f:U\rightarrow U$ is a $B$-homomorphism. Since $h$ is an $A$-homomorphism, we have the following commutative diagram:
		$${\xymatrix{
				U\otimes_BU\ar[r]^{0}\ar[d]^{\Id_U\otimes f}&0\ar[d]^0\\
				U\otimes_BU\ar[r]^{\mu}&U}}$$
		Since $U$ is a simple $B$-module, $f$ must be isomorphism or $0$. It follows by the above commutative diagram that $f=0$, and so $h=0,$ and hence $\Hom_A(N,M)=0$.
		
		Considering the following short exact sequence $0\rightarrow N\rightarrow P_1\rightarrow M\rightarrow0$, we have a long exact sequence $\Hom_A(P_1,M)\rightarrow \Hom_A(N,M)\rightarrow\Ext_A^1(M,M)\rightarrow0$. Since $\Hom_A(N,M)=0$, $\Ext_A^1(M,M)=0.$
		
		Set $P=(0,U,0).$ Then there exists a short exact $0\rightarrow N\oplus P\rightarrow P_1\rightarrow N\rightarrow0$. And so there exists a short exact sequence $\Hom_A(P_1,M)\xrightarrow{(i,\Id_M)}\Hom_A(N\oplus P)\rightarrow\Ext_A^1(N,M)\rightarrow0.$ For any $\alpha=(0,g):P=(0,U,0)\rightarrow (U,U,\mu)=M$. Set $g(\overline{1})=\lambda \overline{1}$ for some $\lambda\in K$, define $l:B\rightarrow U$ satisfying $l(b)=\lambda\pi(b)$. Then $l$ is a $B$-homomorphism. Note that $(l,g):P_1=(B,U,\theta)\rightarrow(U,U,\mu)=M$ is an $A$-homomorphism, and $(0,g)=(l,g)|_{P}$. Consequently, $(i,\Id_M)$ is surjective. It follows by dimension shift that $\Ext_A^n(M,M)=0$ for any $n\geq 2.$ It follows by Lemma \ref{l15}(1) that $\qpd_A(M)=\infty.$	
	\end{proof}

	\bibliographystyle{amsplain}

\end{document}